\def\ps@pprintTitle{%
  \let\@oddhead\@empty
  \let\@evenhead\@empty
  \def\@oddfoot{\reset@font\hfil\thepage\hfil}
  \let\@evenfoot\@oddfoot
}
\newtheorem{theorem}{Theorem}[section]
\newtheorem{lemma}[theorem]{Lemma}
\newtheorem{problem}[theorem]{Problem}
\newtheorem{conjecture}[theorem]{Conjecture}
\newtheorem{prop}[theorem]{Proposition}
\newtheorem{claim}[theorem]{Claim}
\newtheorem{definition}[theorem]{Definition}
\newtheorem{observation}[theorem]{Observation}
\newcommand\lref[1]{Lemma~\ref{lem:#1}}
\newcommand\tref[1]{Theorem~\ref{thm:#1}}
\newcommand\cref[1]{Corollary~\ref{cor:#1}}
\newcommand\clref[1]{Claim~\ref{clm:#1}}
\newcommand\pref[1]{Proposition~\ref{prop:#1}}
\newcommand\ul[1]{\underline #1}
\newcommand\rarrow{\rightarrow}
\newcommand\ttt{ \in {\cal T}}
\newcommand\qq{ \in {\cal Q}}
\begin{document}
\begin{frontmatter}
\title{Nonrepetitive  colorings of lexicographic product of graphs}
\author[renyi]{Bal\'azs Keszegh \fnref{kesz}}
\author[renyi]{Bal\'azs Patk\'os\fnref{pat}}
\author[znju]{Xuding Zhu\fnref{zhu}}
\address[renyi]{Alfr\'ed R{\'e}nyi Institute of Mathematics, Hungarian Academy of Sciences \\ Re\'altanoda u. 13-15 Budapest, 1053 Hungary\\
        {\tt email}: $<$keszegh.balazs,patkos.balazs$>$@renyi.mta.hu}
\address[znju]{Department of Mathematics, Zhejiang Normal University,
   China.  \\ {\tt email:}
        {\it xudingzhu@gmail.com}}
\fntext[kesz]{Research supported by OTKA, grant NK 78439, the J\'anos Bolyai Research Scholarship of the Hungarian Academy of Sciences, and by OTKA under EUROGIGA project GraDR 10-EuroGIGA-OP-003.}
\fntext[pat]{Research supported by Hungarian NSF, under contract PD-83586, and the J\'anos Bolyai Research Scholarship of the Hungarian Academy of Sciences. Corresponding author.}
\fntext[zhu]{Grant Numbers: NSF11171310 and ZJNSF  Z6110786.}
\begin{abstract}
A coloring $c$ of the vertices of a graph $G$ is  nonrepetitive if
there exists no path $v_1v_2\ldots v_{2l}$ for which
$c(v_i)=c(v_{l+i})$ for all $1\le i\le l$. Given  graphs $G$ and $H$
with $|V(H)|=k$, the lexicographic product $G[H]$   is the graph
obtained by substituting every vertex of $G$ by a copy of $H$,  and
every edge of $G$ by a copy of $K_{k,k}$.
We  prove that for a sufficiently long path $P$, a nonrepetitive
coloring of   $P[K_k]$ needs at least $3k+\lfloor k/2\rfloor$
colors. If $k>2$ then we need exactly $2k+1$ colors to
nonrepetitively color   $P[E_k]$, where $E_k$ is the empty graph on
$k$ vertices. If we further require that every copy of $E_k$ be
rainbow-colored and the path $P$ is sufficiently long, then the
smallest number of colors needed for $P[E_k]$ is at least $3k+1$ and
at most $3k+\lceil k/2\rceil$. Finally, we define fractional
nonrepetitive colorings of graphs and consider the connections
between this notion and the above results.\end{abstract}

\begin{keyword} non-repetitive coloring \sep lexicographic product of graphs\sep fractional relaxation \end{keyword}
\end{frontmatter}

\section{Introduction}
A sequence $x_1\ldots x_{2l}$ is a \textit{repetition} if
$x_i=x_{l+i}$ for all
 $1 \le i \le l$. A sequence is \textit{nonrepetitive} if it does not contain
a string of consecutive entries forming a repetition. In 1906, Thue
\cite{T} found an infinite nonrepetitive sequence using only three
symbols.

Alon, Grytzuk, Ha\l uszczak, Riordan \cite{AGHR} generalized the
notion of nonrepetitiveness to graph coloring: a coloring $c$ of a
graph $G$ is \textit{nonrepetitive } if there is no path $v_1,\ldots
,v_{2l}$ in $G$ such that the string $c(v_1),\ldots ,c(v_{2l})$ is a
repetition. The {\em Thue chromatic number } of   $G$ is the least
integer $\pi(G)$ such that there exists a nonrepetitive coloring $c$
of $G$ using  $ \pi(G) $ colors. With this notation, Thue's result
says $\pi(P_{\infty})=3$ (the fact that 2 colors are not enough can
be easily seen   for a path of length at least 4). A survey and a
good introduction to the topic is \cite{G}.

In this paper we are interested in nonrepetitive coloring of the
lexicographic product of graphs.

\begin{definition}
Let $G=(V_1,E_1)$ and $H=(V_2,E_2)$ be two graphs. The
\textit{lexicographic product} of $G$ and $H$ is the graph $G[H]$
with vertex set $V_1 \times V_2$ and $(v_1,v_2)$ is joined to
$(v_1',v_2')$ if either $(v_1,v_1') \in E_1$ or $v_1=v_1'$ and
$(v_2,v_2') \in E_2$.
\end{definition}

For any vertex $v \in V_1$,  the set $  \{(v,v_2):v_2 \in V_2\}$,
denoted by $v[H]$, is called a \textit{layer} of $G[H]$ and the
subgraph induced by a layer is isomorphic to $H$. If all the
vertices in $v[H]$ are colored by distinct colors, then we say
$v[H]$ is {\em rainbow colored}. A {\em rainbow  nonrepetitive
coloring} of $G[H]$ is a nonrepetitive coloring $c$ of $G[H]$ in
which all the layers are rainbow colored. The {\em rainbow Thue
chromatic number } of $G[H]$ is the least integer $\pi_R(G[H])$ such
that there exists a rainbow nonrepetitive coloring $c$ of $G[H]$
using $ \pi_R(G[H]) $ colors.

Denote by $E_n,K_n,P_n$  the empty graph, the complete graph and the
path on $n$ vertices, respectively.  It follows from the definition
that $\pi(G[E_k]) \le \pi_R(G[E_k]) \le \pi_R(G[K_k])= \pi(G[K_k])$
for any graph $G$ (note that every nonrepetitive coloring of
$G[K_n]$ is a rainbow nonrepetitive colouring).

Non-repetitive coloring of lexicographic product of graphs has not
been studied systematically before. However, a result of Bar\'at and
Wood \cite{baratwood} can be rephrased in our context: in Lemma 2 of
their paper they showed that for any tree $T$ and integer $k$,
$\pi(T[K_k]) \le 4k$. We shall prove that this bound is sharp, by
constructing  a tree $T$ for which $\pi(T[E_k])=4k$ for every
positive integer $k$.

Our main results concentrate on the lexicographic product of   paths
with complete graphs or empty graphs.

\begin{theorem}
\label{thm:2k1} For any $n \ge 4$ and $k\ne 2$,
$\pi(P_n[E_k])=2k+1$. For $k=2$,   $5 \le \pi(P_n[E_2]) \le
6$.
\end{theorem}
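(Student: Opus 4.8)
\emph{Proof strategy.} Write $L_1,\dots,L_n$ for the layers of $P_n[E_k]$ and $c(L_i)$ for the set of colors appearing on $L_i$; we handle the lower bound $\pi(P_n[E_k])\ge 2k+1$ and the upper bounds separately. The lower bound rests on four elementary observations valid for any nonrepetitive coloring. (i) Since consecutive layers span a $K_{k,k}$, $c(L_i)\cap c(L_{i+1})=\emptyset$. (ii) If two consecutive layers each use a color twice, say $c(u)=c(u')$ with $u,u'\in L_i$ and $c(w)=c(w')$ with $w,w'\in L_{i+1}$, then $u,w,u',w'$ is a path with color sequence $c(u)c(w)c(u)c(w)$, a repetition; hence at most one of any two consecutive layers is non-rainbow. (iii) If $\alpha\in c(L_i)\cap c(L_{i+2})$ and $\beta\in c(L_{i+1})\cap c(L_{i+3})$, picking vertices of these colors in the four layers gives a path with color sequence $\alpha\beta\alpha\beta$, a repetition. (iv) If $L_i$ uses a color twice and $c(L_{i-1})\cap c(L_{i+1})\ne\emptyset$, a similar path of length $4$ is a repetition.

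Assume $k\ge 2$ (for $k=1$ this is Thue's theorem $\pi(P_n)=3$) and suppose some nonrepetitive coloring of $P_n[E_k]$, $n\ge 4$, uses at most $2k$ colors; consider four consecutive layers $L_1,L_2,L_3,L_4$. By (iii) with $i=1$, $c(L_1)\cap c(L_3)=\emptyset$ or $c(L_2)\cap c(L_4)=\emptyset$; reversing the path if necessary we may assume the former, so $c(L_1),c(L_2),c(L_3)$ are pairwise disjoint and $|c(L_1)|+|c(L_2)|+|c(L_3)|\le 2k$. If $L_2$ is rainbow, then $c(L_1)$ and $c(L_3)$ are disjoint subsets of the at most $k$ colors missing from $L_2$, so each of them has size at most $k-1$; then $L_3$ uses a color twice, so by (iv) $c(L_2)\cap c(L_4)=\emptyset$, whence $c(L_2),c(L_3),c(L_4)$ are pairwise disjoint and $|c(L_4)|\le k-1$, making $L_3$ and $L_4$ both non-rainbow and contradicting (ii). If $L_2$ is not rainbow, then by (ii) both $L_1$ and $L_3$ are rainbow, so $|c(L_1)|+|c(L_2)|+|c(L_3)|=2k+|c(L_2)|>2k$, again a contradiction. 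Hence $2k$ colors never suffice; this proves $\pi(P_n[E_k])\ge 2k+1$ for every $k\ge 1$ and $n\ge 4$, and in particular $\pi(P_n[E_2])\ge 5$.

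For the upper bounds one exhibits explicit colorings. Since only one color is available beyond the two disjoint $k$-sets forced on a pair of consecutive layers, for large $n$ the coloring cannot be built from rainbow layers alone (consistent with the later fact that the rainbow parameter $\pi_R$ is strictly larger); thus one must interleave rainbow layers with layers that repeat colors, spacing the non-rainbow layers far enough apart that none of the length-$4$ obstructions (iii)--(iv) is activated. The plan is to fix a nonrepetitive word over a small alphabet and read off from it, layer by layer, a size-$k$ color pattern from $\{1,\dots,2k+1\}$ — for instance consecutive intervals in $\mathbb{Z}_{2k+1}$ with, at controlled positions, a color doubled into the one spare slot — always keeping $c(L_i)\cap c(L_{i+1})=\emptyset$, and then to check that no path yields a repetition. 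For $k\ge 3$ there is enough room in $2k+1$ colors for this to succeed; for $k=2$ the scheme collapses (five colors are too rigid once consecutive disjointness is imposed), and instead one uses $6=3k$ colors split into three pairwise disjoint pairs governed by a Thue word, chosen so that backtracking paths cannot exploit it.

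The main obstacle is the verification in the upper bound: a path of $P_n[E_k]$ projects not to a subpath of $P_n$ but only to a $\pm 1$ walk, so one must rule out ``folded'' repetitions in addition to straight ones, and the color patterns of layers at distances $2,3,\dots$ must be controlled simultaneously along every such walk — getting the placement of the non-rainbow layers exactly right is the delicate point. The same rigidity that makes $k=2$ exceptional is also what leaves open whether $5$ or $6$ colors is the truth there, which is why the theorem only asserts $5\le\pi(P_n[E_2])\le 6$.
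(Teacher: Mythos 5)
Your lower-bound argument is correct and essentially reproduces the paper's approach: your observations (i) and (iii) are the paper's Lemma~\ref{lem:path4}, and (ii) and (iv) play the role of (a weakening of) Lemma~\ref{lem:degree}. The case analysis you run on $L_1,\dots,L_4$ does establish $\pi(P_n[E_k])\ge 2k+1$ for all $k\ge 1$ and $n\ge 4$, and it is a reasonable reorganization of the same ingredients.

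The genuine gap is in the upper bounds. You do not give a coloring and do not verify anything; you only describe a ``plan'' (size-$k$ patterns read off a nonrepetitive word, consecutive intervals in $\mathbb{Z}_{2k+1}$ with a doubled color at controlled positions) and then concede that ``the main obstacle is the verification.'' That verification is exactly the content of the proof, and the difficulty you yourself flag --- that a path in $P_n[E_k]$ projects only to a $\pm1$ walk, so folded repetitions must be excluded --- is not addressed at all. Moreover, the scheme you sketch does not match what actually works: the paper's coloring deliberately makes every second and fourth layer of each block of four \emph{monochromatic} in a symbol $s_i$ drawn from a palindrome-free nonrepetitive word on four symbols (a stronger primitive than a Thue word, and essential here), keeps every first layer equal to a fixed rainbow set $X=\{1,\dots,k\}$, and puts a $k$-subset of $Y\setminus\{s_i\}$ on the third layer; nonrepetitiveness is then obtained by contracting each block to a looped vertex of $P'_\infty$ and invoking the K\"undgen--Pelsmajer walk lemma (Lemma~\ref{lem:walk}), after which the residual cases (the projected walk is constant or empty) are ruled out by the rainbow $X$-layer acting as a separator. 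None of that structure --- the monochromatic layers, the palindrome-free word, the walk lemma, the separator argument --- is present in your sketch, and the $k=2$ case (where the color budget is $6$ rather than $2k+1$) is handled by the same construction with $|Y|=4$, not by an ad hoc scheme. As it stands your proposal proves only the inequality $\pi(P_n[E_k])\ge 2k+1$ and leaves the matching upper bounds (and the $k=2$ bound of $6$) unproved.
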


\begin{theorem}
\label{thm:rainbow} For any pair of integers $n \ge 24$ and $k\ge
2$, $3k+1 \le \pi_R(P_n[E_k]) \le  3k+\lceil k/2\rceil$.
\end{theorem}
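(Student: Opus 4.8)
The plan is to prove the two inequalities separately, using throughout a description of paths in $P_n[E_k]$ in terms of their \emph{layer walks}. Since $E_k$ has no edges, every edge of $P_n[E_k]$ joins two consecutive layers, so a path $v_1v_2\dots v_m$ projects onto a walk $\ell_1\ell_2\dots\ell_m$ on $P_n$ with $|\ell_{i+1}-\ell_i|=1$, and, conversely, because all vertices of a layer are twins, a repetition is determined by such a walk together with a consistent choice of colours on the visited vertices. Writing $S_i$ for the colour set of layer $i$, I would first record the elementary constraints: a rainbow nonrepetitive colouring is proper, and since every vertex of layer $i$ is joined to every vertex of layer $i{+}1$ we get $S_i\cap S_{i+1}=\emptyset$; and checking the finitely many sign patterns of a length-$4$ layer walk shows that the only ``period-two'' repetitions not excluded automatically (by layer disjointness or by the rainbow condition) are the \emph{monotone} ones, so for any four consecutive layers $S_i\cap S_{i+2}=\emptyset$ or $S_{i+1}\cap S_{i+3}=\emptyset$. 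For $k\ge 2$ there are in addition genuine non-monotone repetitions, the shortest useful ones having period four and being carried by a walk such as $a,a{+}1,a{+}2,a{+}3,a{+}2,a{+}3,a{+}4,a{+}5$; the fact that such a walk revisits two layers and hence needs distinct vertices (equivalently distinct colours) on them is exactly what makes these harmless when $k=1$ but available when $k\ge 2$, and this is the source of the extra colour in the lower bound.

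For the lower bound, assume $c$ uses at most $3k$ colours. If it uses at most $3k-1$, then $S_i$ and $S_{i+2}$ are $k$-subsets of the complement of $S_{i+1}$, a set of size at most $2k-1$, hence they meet; likewise $S_{i+1}\cap S_{i+3}\ne\emptyset$, contradicting the monotone period-two constraint on layers $1,2,3,4$. So the colour set has size exactly $3k$, the complement of $S_i\cup S_{i+1}$ has size exactly $k$, and whenever $S_i\cap S_{i+2}=\emptyset$ the sets $S_i,S_{i+1},S_{i+2}$ partition the $3k$ colours; a short argument with the period-two constraint shows such a triple $A,B,C$ does occur, say at layers $i_0,i_0{+}1,i_0{+}2$. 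One then pushes outward from it: combining the monotone period-two and period-three constraints with the non-monotone period-four constraint, each admissible placement of $S_{i_0\pm1},S_{i_0\pm2},\dots$ forces the colour-set sequence, layer by layer, into one of a few rigid shapes (all close to the $3$-periodic pattern $A,B,C,A,B,C,\dots$), and every one of these contains a monotone repetition of period at most four. Making this last step precise is the main obstacle: it is a finite but fiddly case analysis of how layers $S_{i\pm 1}$ can sit relative to $S_i,S_{i+1},S_{i+2}$, and the hypothesis $n\ge 24$ is exactly what guarantees enough consecutive layers for every branch to close with an actual repetition. This gives $\pi_R(P_n[E_k])\ge 3k+1$.

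For the upper bound I would give an explicit colouring. The first attempt is to take pairwise disjoint colour sets $V_0,V_1,V_2$ of size $k$ and a square-free (hence proper) ternary word $t_1t_2\dots$ over $\{0,1,2\}$, and set $S_i=V_{t_i}$: this makes consecutive layers disjoint and, since a monotone repetition corresponds exactly to a square in $t$, kills all monotone repetitions, but it leaves certain non-monotone ones, which correspond to other forbidden factors of $t$ that — as the lower bound shows — cannot all be avoided with only three letters. The remedy is to enlarge the palette by a pool $D$ of $\lceil k/2\rceil$ spare colours and, at positions dictated by a carefully chosen auxiliary pattern, swap a fixed $\lceil k/2\rceil$-subset of $V_{t_i}$ for a subset of $D$; this keeps consecutive layers disjoint, spoils the colour coincidences that the non-monotone repetitions require, and the swap pattern is arranged so as not to create any new monotone or non-monotone repetition. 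The verification goes through the layer-walk description: one checks that no walk all of whose ``shifted'' layer-pairs lie at distance at least $2$ can be coloured consistently. The monotone cases reduce essentially to the square-freeness of $t$ and the disjointness of $V_0,V_1,V_2$ and $D$; the delicate part, where the bulk of the work lies, is the non-monotone walks — verifying that the perturbation (and in particular the size $\lceil k/2\rceil$ of $D$) suffices for all of them simultaneously, and that the colour coincidences forced by a putative repetition never conflict with keeping the vertices distinct on the repeated layers. Assembling these gives $\pi_R(P_n[E_k])\le 3k+\lceil k/2\rceil$.
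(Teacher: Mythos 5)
Your proposal isolates the right intermediate observations (consecutive layers are colour-disjoint; among length-$4$ layer walks only the monotone ones give period-two obstructions under the rainbow hypothesis; a pure ternary square-free pattern kills monotone repetitions but not all non-monotone ones), but in both directions the step you flag as ``the main obstacle'' or ``the bulk of the work'' is not a detail to be filled in --- it \emph{is} the proof, and your outline of it would not close.

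For the lower bound, your plan handles only the situation in which $S_i$ and $S_{i+2}$ are either equal or disjoint, which is what lets you read the colour-set sequence as a word over three letters. You do not account for the case of a \emph{partial} overlap $S_j\neq S_{j+2}$, $S_j\cap S_{j+2}\neq\emptyset$; the dichotomy ``$S_i\cap S_{i+2}=\emptyset$ or $S_{i+1}\cap S_{i+3}=\emptyset$'' does not rule this out, and this case requires its own local analysis (the paper disposes of it first, over a window of seven layers, before reducing to the three-letter word). Even once one is in the ``word over $\{A,B,C\}$'' regime, ``push outward into rigid $3$-periodic shapes'' is not how it goes: a square-free ternary word is far from $3$-periodic, and the repetitions you need are non-monotone walks whose existence is not obvious. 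The paper locates them by a combinatorial analysis of the ternary word itself --- bounding the gaps between ``peaks'' (Lemmas~\ref{lem:gap}--\ref{lem:find}) to guarantee, in any square-free word of length $22$, a factor equivalent to $CB\ul{A}B\ul{C}BA$, $ACB\ul{A}BC\ul{A}CBA$ or $BACB\ul{A}BCA\ul{B}ACBA$, each of which is then turned into an explicit repetitive path using $k\ge 2$. Nothing in your outline produces these specific forbidden factors or explains why $n\ge 24$ suffices.

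For the upper bound, the construction you sketch (ternary square-free word plus an unspecified ``swap pattern'' into a spare pool $D$ of size $\lceil k/2\rceil$) is not carried out, and it also diverges from the structure that makes the verification tractable. The paper does not perturb a ternary colouring: it colours every fourth layer with a fresh \emph{full-size} set $X$ of $k$ colours, and encodes a letter $s_i$ of a \emph{palindrome-free} nonrepetitive word on four symbols into the three intermediate layers using five half-size blocks $A,\dots,E$, with the property that any colour from the middle layer together with any colour from an adjacent layer determines $s_i$. The $X$-layers act as separators, and the verification then reduces a hypothetical repetitive path to a repetitively-coloured walk in the base path with loops and invokes the K\"undgen--Pelsmajer lemma (\lref{walk}) to force the two halves of that walk to coincide; that lemma needs the base word to be palindrome-free, which a mere square-free ternary word is not (indeed no palindrome-free square-free word on three letters exists). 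Your check --- ``no walk all of whose shifted layer-pairs lie at distance at least $2$ can be coloured consistently'' --- has no analogue of this reduction and would have to argue directly about arbitrarily long non-monotone walks, which is exactly what the walk lemma is there to avoid.
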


\begin{theorem}
\label{thm:complete} For any   integer $n \ge 28$, $3k+\lfloor k/2\rfloor \le \pi(P_n[K_k]) \le 4k$.
\end{theorem}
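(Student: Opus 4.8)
The upper bound is immediate from the quoted result of Bar\'at and Wood, since $P_n$ is a tree, giving $\pi(P_n[K_k])\le 4k$. For the lower bound fix a nonrepetitive colouring $c$ of $P_n[K_k]$ using $N$ colours, and for each layer $i$ let $C_i$ be its colour set. Each layer spans a $K_k$ and consecutive layers span a $K_{k,k}$, so $c$ is proper (a monochromatic edge would be a repetition of length $2$); hence $|C_i|=k$ and $C_i\cap C_{i+1}=\emptyset$ for every $i$. I would first record the basic forbidden patterns of the sequence $C_1,\dots,C_n$. \textbf{(i)} It is square-free: if $C_{i+t}=C_{i+t+l}$ for $t=0,\dots,l-1$, then picking a colour of $C_{i+t}$ in layers $i+t$ and $i+t+l$ reads off a straight repetition of length $2l$. \textbf{(ii)} There is no $j$ with $C_{j-1}\cap C_{j+1}\ne\emptyset$ \emph{and} $C_j\cap C_{j+2}\ne\emptyset$, since such a $j$ gives a length-$4$ repetition on layers $j-1,\dots,j+2$. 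From (ii), for all but a sparse set of indices the triple $C_i,C_{i+1},C_{i+2}$ is pairwise disjoint and so spans exactly $3k$ colours, whence $N\ge 3k$; and combining (ii) with $N<4k$ one cannot have four pairwise-disjoint triples in a row, since then $C_i$ would meet $C_{i+3}$, $C_{i+1}$ would meet $C_{i+4}$, and $C_{i+2}$ would meet $C_{i+5}$, giving a straight length-$6$ repetition.

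Now assume for contradiction that $N\le 3k+\lfloor k/2\rfloor-1$. Fix a pairwise-disjoint triple and set $A=C_p$, $B=C_{p+1}$, $C=C_{p+2}$; since $N<7k/2$, any $k$-set that avoids one of $A,B,C$ meets the union of the other two in more than $k/2$ colours, so each $C_i$ is \Ar{}, \Br{}, or \Cr{} (sharing more than $\lceil k/2\rceil$ colours with a unique reference set), and adjacent layers have different rich types. The plan is to analyse the induced ternary rich-type word in Thue fashion: using $n\ge28$ to make short patterns recur, one shows it is essentially square-free, splits the short factors of $C_1,\dots,C_n$ into a family ${\cal T}$ of ``good'' pairwise-disjoint blocks and a family ${\cal Q}$ of local configurations describing how a rich type can return, and follows the transitions ${\cal T}\rarrow{\cal Q}\rarrow{\cal T}$. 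The quantitative heart is that a \emph{return} to a rich type is costly: if layers $i<i'$ are both \Ar{}, lie close together, and no layer strictly between them is \Ar{}, then $C_i$ and $C_{i'}$ must differ in at least $\lfloor k/2\rfloor$ colours, for otherwise a path can be made to oscillate over the few intermediate layers — exploiting that each $K_k$-layer supplies $k$ interchangeable vertices — so as to produce a non-straight repetition. Summing these deficits over one period of the rich-type word yields at least $\lfloor k/2\rfloor$ colours outside $A\cup B\cup C$, contradicting $N\le 3k+\lfloor k/2\rfloor-1$.

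The main obstacle is exactly this last step. One has to enumerate the non-monotone candidate repetitions that live on three to five consecutive layers — ``out-and-back'' zig-zags and paths that reuse several vertices of a single layer — decide which are excluded by properness alone and which force genuine inequalities between the colour sets involved, and check that the resulting threshold comes out as the right value (it is in fact $\lceil k/2\rceil$ in the tight regime, matching up to the floor/ceiling the rainbow upper bound $3k+\lceil k/2\rceil$ of \tref{rainbow}). The hypothesis $n\ge28$ is then whatever length guarantees that a full period of the rich-type word, together with all the layers needed to run the oscillation arguments, fits inside $P_n$.
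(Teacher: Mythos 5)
The upper bound citation is fine, and your basic properness/square-free observations are sound. But the lower bound is an outline, not a proof, and one of your central claims is false as stated.

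You fix a single reference triple $A=C_p$, $B=C_{p+1}$, $C=C_{p+2}$ and assert that, because $N<7k/2$, \emph{every} layer $C_i$ is \Ar{}, \Br{}, or \Cr{} with respect to this fixed triple. That does not follow. First, a generic $C_i$ (with $i$ far from $p$) is not forced to be disjoint from any of $A,B,C$, so ``avoids one of them and hence lands mostly in the other two'' has no basis. Second, even for a layer that \emph{is} disjoint from, say, $C$: having at least $\lceil k/2\rceil+1$ colours in $A\cup B$ does not put more than $\lceil k/2\rceil$ of them into a single one of $A$ or $B$ --- the colours can be split roughly evenly, leaving the layer rich in neither. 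The richness relation cannot be anchored to one fixed triple; it only propagates locally. The paper's proof handles exactly this difficulty by running an inductive labeling process: it labels $X_{i+1}$ by the label of the closest previously-labeled $X_j$ for which $X_{i+1}$ is $X_j$-rich, and the technical content (Claim~\ref{clm-new}, split into Lemmas~\ref{lem:def}--\ref{lem:defg2}) establishes that when you move past one disjoint triple you meet a new disjoint triple whose members are rich in the right ordered sense, so the labeling is well-defined and propagates correctly. Your proposal has no substitute for this.

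The second gap is the ``return is costly'' heuristic and the plan to ``sum deficits over a period.'' You explicitly write that ``the main obstacle is exactly this last step'' and that one ``has to enumerate the non-monotone candidate repetitions\dots and check that the resulting threshold comes out as the right value.'' That is an acknowledgment that the proof is not done, not a proof. Moreover, the mechanism you gesture at (accumulate colour-deficits until you exceed the budget) is not how the argument actually closes. The paper's Proposition~\ref{prop:useful} gives the key transitive fact --- if $Y$ is $X$-rich and $Z$ is $Y$-rich then $|X\cap Z|\ge 2$ --- and Observation~\ref{obs-new} turns repeated labels at short distance into two-element intersections. Then the label sequence, being repetition-free up to length $6$, is analyzed via the gap/peak lemmas (Lemmas~\ref{lem:gap}, \ref{lem:pal}, \ref{lem:peak}, \ref{lem:find}) to produce one of three concrete patterns, from which a genuine repetitive path in $P_n[K_k]$ of length up to $16$ is built explicitly (with the extra Claim~\ref{clm:rich} needed in the longest case). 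None of this machinery, and in particular none of the explicit repetitive path constructions, is present in your proposal. As it stands, the lower bound is not established.
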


\section{Proofs}

We present the proofs of the lower and upper bounds in separate
subsections.  Most lower bounds   rely on the same lemmas. The
proofs for the upper bounds   use earlier ideas and results by
K\"undgen and Pelsmajer \cite{KP}.

\subsection{Lower bounds}
\label{sec:low}

\begin{lemma}
\label{lem:degree} Let $c$ be a nonrepetitive coloring of $G[E_k]$.
If $v \in V(G)$ is a vertex of degree $d$ and  two vertices in
$v[E_k]$ receive the same color, then $c$ uses at least $dk+1$
colors.
\end{lemma}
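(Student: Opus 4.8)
The plan is to produce $dk+1$ distinct colors used by $c$: the colors appearing on the common neighbourhood of the layer $v[E_k]$ (there are $dk$ of them, and they are all distinct), together with the color repeated inside $v[E_k]$.

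First note that a nonrepetitive coloring is in particular a proper coloring, since an edge $uw$ with $c(u)=c(w)$ is already a repetition (the case $l=1$ in the definition). Next, because $E_k$ has no edges, the definition of the lexicographic product gives that every vertex of $v[E_k]$ is adjacent to exactly the set $N:=\bigcup_{u\in N_G(v)}u[E_k]$, and $|N|=dk$, since the $d$ layers $u[E_k]$ with $u\in N_G(v)$ are pairwise disjoint and each has $k$ vertices. Let $x,y$ be the two vertices of $v[E_k]$ with $c(x)=c(y)=:\alpha$.

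I would then establish two facts. (i) No vertex of $N$ receives color $\alpha$: every such vertex is adjacent to $x$, so this is immediate from properness. (ii) The vertices of $N$ receive pairwise distinct colors: if $z,w\in N$ were distinct with $c(z)=c(w)$, then $z,x,w,y$ would be a path in $G[E_k]$ --- its four vertices are distinct since $v[E_k]\cap N=\emptyset$ together with $z\ne w$ and $x\ne y$, and the consecutive pairs $zx$, $xw$, $wy$ are all edges because $z$ and $w$ lie in $N$ and hence are adjacent to both $x$ and $y$ --- whose color word is $c(z),\alpha,c(z),\alpha$, a repetition, contradicting that $c$ is nonrepetitive. Combining (i) and (ii), the $dk$ colors occurring on $N$ are pairwise distinct and none of them is $\alpha$, while $\alpha$ itself is used on $x$; hence $c$ uses at least $dk+1$ colors.

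There is no real obstacle here; the one point that needs care is the choice of the witnessing path in (ii): one must take $z,x,w,y$, alternating between $N$ and the layer $v[E_k]$, rather than $z,x,y,w$, since $x$ and $y$ are non-adjacent (they lie in a copy of $E_k$) and so the latter sequence is not a path. Everything else follows directly from the structure of the lexicographic product.
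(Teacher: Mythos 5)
Your proof is correct and follows essentially the same route as the paper: exhibit a path $z,x,w,y$ alternating between the neighbourhood $N$ and the layer $v[E_k]$ to force all $dk$ colors on $N$ to be distinct, then note the repeated color $\alpha$ cannot appear on $N$. The extra care you take in spelling out why the four vertices are distinct and why $zxyw$ would not be a path is sound but not a departure from the paper's argument.
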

\begin{proof}
Let $v_1,v_2,\ldots ,v_d$ be the neighbors of $v$ in $G$,  and let
$u_1,u_2 \in v[E_k]$ be vertices with $c(u_1)=c(u_2)$. For any pair
of vertices $w_1,w_2 \in \cup_{i=1}^dv_i[E_k]$, we have $c(w_1)\neq
c(w_2)$, for otherwise the coloring of the path $w_1u_1w_2u_2$ would
be a repetition. Also colors used for vertices in
$\cup_{i=1}^dv_i[E_k]$ are different from that of $u_1$ and $u_2$.
Hence $c$ uses at least $dk+1$ colors.
\end{proof}

\begin{lemma}
\label{lem:path4} Let $P=(v_1v_2v_3v_4)$ be a path of $4$ vertices
in $G$ and $c$ be a nonrepetitive  coloring  of $G[E_k]$. Then
either the color sets of the first three layers are pairwise
disjoint or the color sets of the last three layers are pairwise
disjoint. In particular, if all the four layers are rainbow colored,
then $c$ uses at least $3k$ colors.
\end{lemma}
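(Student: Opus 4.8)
The plan is to track, for each $i\in\{1,2,3,4\}$, the set $C_i$ of colors appearing on the layer $v_i[E_k]$, and to exploit the fact that over the path $v_1v_2v_3v_4$ of $G$ consecutive layers of $G[E_k]$ induce complete bipartite graphs, so that many short paths of $G[E_k]$ are available as candidate repetitions.

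First I would record two basic facts. Since $v_iv_{i+1}\in E(G)$, every vertex of $v_i[E_k]$ is adjacent in $G[E_k]$ to every vertex of $v_{i+1}[E_k]$; as a nonrepetitive coloring is in particular proper (a monochromatic edge is a repetition of length $2$), this gives $C_i\cap C_{i+1}=\emptyset$ for $i=1,2,3$. Consequently the only possible overlaps among $C_1,C_2,C_3$ come from $C_1\cap C_3$, and the only possible overlaps among $C_2,C_3,C_4$ come from $C_2\cap C_4$.

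The heart of the argument is to show that $C_1\cap C_3$ and $C_2\cap C_4$ cannot both be nonempty. Suppose a color $\alpha$ lies in $C_1\cap C_3$, witnessed by $a_1\in v_1[E_k]$ and $a_3\in v_3[E_k]$, and a color $\beta$ lies in $C_2\cap C_4$, witnessed by $b_2\in v_2[E_k]$ and $b_4\in v_4[E_k]$. Then $a_1b_2a_3b_4$ is a path on four vertices in $G[E_k]$: its four vertices lie in four distinct layers (so they are pairwise distinct), and consecutive ones lie in consecutive layers of $G$, hence are joined. Its color sequence is $\alpha,\beta,\alpha,\beta$, a repetition — contradiction. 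Hence $C_1\cap C_3=\emptyset$ or $C_2\cap C_4=\emptyset$; combined with the previous paragraph, in the first case $C_1,C_2,C_3$ are pairwise disjoint and in the second case $C_2,C_3,C_4$ are pairwise disjoint, which is exactly the claimed dichotomy.

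For the last sentence, if all four layers are rainbow-colored then $|C_i|=k$ for each $i$, and in whichever of the two cases holds the three pairwise disjoint color sets already account for $3k$ colors. I do not expect a genuine obstacle here; the only subtlety is verifying that $a_1b_2a_3b_4$ is honestly a path (no repeated vertex), which is immediate because its vertices come from four distinct layers, so the argument reduces to the short case analysis above.
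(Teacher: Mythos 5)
Your proof is correct and follows exactly the same approach as the paper's: first observe that adjacent layers must have disjoint color sets (to avoid length-2 repetitions), then note that colors in both $C_1\cap C_3$ and $C_2\cap C_4$ would yield an $\alpha\beta\alpha\beta$ path. The extra care you took to check that the four chosen vertices really form a path is a minor elaboration the paper leaves implicit.
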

\begin{proof}
To avoid repetitions of length two,  $c[v_i[E_k]] \cap
c[v_{i+1}[E_k]] =\emptyset$ for all $i=1,2,3$. If $a \in
c[v_1[E_k]]\cap c[v_3[E_k]]$ and $b \in c[v_2[E_k]]\cap
c[v_4[E_k]]$, then there is a path with colors $abab$.  Therefore,
either $c[v_1[E_k]],c[v_2[E_k]], c[v_3[E_k]]$ or
$c[v_2[E_k]],c[v_3[E_k]], c[v_4[E_k]]$ are pairwise disjoint.
\end{proof}

We now construct a tree $T$ with $\pi(T[E_k])$ matches the upper
bound of Bar\'at and Wood \cite{baratwood} mentioned in the
introduction. Let $T_{3,6}$ denote the rooted tree in which  all
non-leaf vertices have degree three, and all leaves have distance
$5$ from root vertex, i.e. $T_{3,6}$ looks like the usual binary
tree except that the root has three children. We will use the
notions children and father in the standard way.

\begin{lemma}
\label{lem:ball}
A rainbow nonrepetitive  coloring $c$ of $T_{3,6}[E_k]$  uses at least $4k$ colors.
\end{lemma}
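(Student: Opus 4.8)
The plan is to reduce the statement to producing four layers whose colour sets are pairwise disjoint: since $c$ is rainbow, every layer carries exactly $k$ colours, so four such layers already give $4k$ colours. The cheapest configuration of this kind is a vertex $v$ with three neighbours $w_1,w_2,w_3$ whose colour sets are pairwise disjoint; then $v,w_1,w_2,w_3$ work, because each $vw_i$ is an edge and hence $c[v[E_k]]\cap c[w_i[E_k]]=\emptyset$. Every internal vertex of $T_{3,6}$ has degree $3$, so I would argue by contradiction: suppose $c$ uses at most $4k-1$ colours, so that \emph{every} internal vertex of $T_{3,6}$ has two neighbours whose colour sets meet.

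First I would build a root-to-leaf path $x_0=r,x_1,\dots,x_5$ greedily. At $x_i$, reached from $x_{i-1}$ (and with no incoming edge at the root), the hypothesis supplies two neighbours of $x_i$ with intersecting colour sets, and since at most one neighbour of $x_i$ is not a child of $x_i$ (namely $x_{i-1}$), one member of that pair may be taken to be a child $x_{i+1}$ of $x_i$; let $x_i'$ be the other member, so $c[x_i'[E_k]]\cap c[x_{i+1}[E_k]]\neq\emptyset$. Picking any child $x_{i+2}$ of $x_{i+1}$ and applying \lref{path4} to the $4$-vertex path $x_i'\,x_i\,x_{i+1}\,x_{i+2}$: its first triple is not pairwise disjoint, so its last triple $c[x_i[E_k]],c[x_{i+1}[E_k]],c[x_{i+2}[E_k]]$ is; in particular $c[x_i[E_k]]\cap c[x_{i+2}[E_k]]=\emptyset$. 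Doing this for $i=0,1,2,3$, and using that adjacent layers have disjoint colour sets, yields that each consecutive triple $\{c[x_i[E_k]],c[x_{i+1}[E_k]],c[x_{i+2}[E_k]]\}$, $i=0,1,2,3$, is pairwise disjoint.

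Next comes a counting step. The pairwise disjoint triple $c[x_0[E_k]],c[x_1[E_k]],c[x_2[E_k]]$ uses $3k$ colours, so at most $k-1$ colours lie outside their union; as $c[x_3[E_k]]$ avoids $c[x_1[E_k]]$ and $c[x_2[E_k]]$, it must meet $c[x_0[E_k]]$. The identical argument applied to the disjoint triples $\{x_1,x_2,x_3\}$ and $\{x_2,x_3,x_4\}$ gives $c[x_1[E_k]]\cap c[x_4[E_k]]\neq\emptyset$ and $c[x_2[E_k]]\cap c[x_5[E_k]]\neq\emptyset$. Finally I would pick colours $\alpha\in c[x_0[E_k]]\cap c[x_3[E_k]]$, $\beta\in c[x_1[E_k]]\cap c[x_4[E_k]]$, $\gamma\in c[x_2[E_k]]\cap c[x_5[E_k]]$ and vertices $p_i\in x_i[E_k]$ coloured $\alpha,\beta,\gamma,\alpha,\beta,\gamma$ in turn; since consecutive layers along $x_0\cdots x_5$ form a complete bipartite graph, $p_0p_1p_2p_3p_4p_5$ is a path in $T_{3,6}[E_k]$ with colour sequence $\alpha\beta\gamma\alpha\beta\gamma$, a repetition — contradiction, so $c$ uses at least $4k$ colours.

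The part needing the most care is the greedy construction: one has to check that an intersecting pair of neighbours always offers a legal forward move $x_{i+1}$ and that all four invocations of \lref{path4} really are on honest $4$-vertex paths. The reason $T_{3,6}$ is taken with leaves at distance $5$ from the root, and with branching $3$ rather than $2$, is exactly this: branching $3$ makes $x_0$ (and every $x_i$) have a spare neighbour $x_i'$, and depth $5$ furnishes the six vertices $x_0,\dots,x_5$ needed to house the length-six repetition; with fewer levels one only extracts the weaker bound $3k$.
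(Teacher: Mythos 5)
Your proof is correct and follows essentially the same plan as the paper's: argue by contradiction, descend from the root to produce a six-vertex path whose consecutive triples of colour sets are pairwise disjoint, then pigeonhole the at most $4k-1$ colours to manufacture a length-six repetition. The paper packages the descent step into a two-part internal Claim (whose first half is exactly the parent/children/grandchild instance of \lref{path4} and whose second half is the pigeonhole forcing two children to intersect), whereas you invoke \lref{path4} directly at each greedy step and count once at the end; the content is identical.
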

\begin{proof}
Assume $c$ is a rainbow nonrepetitive coloring of $T_{3,6}[E_k]$
using  at most $4k-1$ colors.

\begin{claim}
Let $v \in V(T_{3,6})$.
\begin{itemize}
\item If $v$ has two children  $v_1,v_2$, with $c[v_1[E_k]]\cap c[v_2[E_k]]\neq \emptyset$,
then for any children $v_3$ of $v_1$ or $v_2$, $c[v[E_k]] \cap c[v_3[E_k]]=\emptyset$.
\item If  $w$ is the father of $v$,   $v_1,v_2$ are children of $v$,
with $c[w[E_k]] \cap (c[v_1[E_k]] \cup c[v_2[E_k]])=\emptyset$, then $c[v_1[E_k]] \cap c[v_2[E_k]]\neq \emptyset$.
\end{itemize}
\end{claim}
\begin{proof}  The first statement is true, for otherwise
there is a path $u_1uu_2u_3$ of size four whose colors form a
repetition, where $u \in v[E_k], u_i \in v_i[E_k]$. The second
statement follows from the pigeon-hole principle and the fact that
 $c[v[E_k]]$ are disjoint from
$c[v_1[E_k]] \cup c[v_2[E_k]] \cup c[w[E_k]]$.
\end{proof}

Let $v_1,v_2,v_3$ be the children of the root $r$. As $c[r[E_k]]
\cap (\cup_{j=1}^3c[v_j[E_k]])=\emptyset$ and the number of colors
used by $c$ is at most $4k-1$, there exist $1 \le i <j \le 3$ with
$c[v_i[E_k]\cap c[v_j[E_k]]\neq \emptyset$. By the Claim, for any
children $w$ of $v_i$ or $v_j$, $c[r[E_k]] \cap
c[w[E_k]]=\emptyset$. Again, as the total number of colors is at
most $4k-1$, for the two children $w_1, w_2$ of $v_i$, we have
$c(w_1) \cap c(w_2) \ne \emptyset$. Repeat this argument, we find a
path $u_0u_1u_2uu_3u_4u_5$ in $T_{3,6}$ such that $u_0$ is the root
of $T_{3,6}$ and $c[u_i[E_k]]$ is disjoint from $c[u_j[E_k]]$ for
$j=i\pm 1,2$. But then again as $c$ uses at most $4k-1$ colors we
find vertices $w_i \in u_i[E_k]$ $i=0,1,\ldots,5$ such that
$c(w_0)=c(w_3),c(w_1)=c(w_4),c(w_2)=c(w_5)$ and thus
$w_0w_1w_2w_3w_4w_5$ is a repetition of size six.
\end{proof}

\begin{lemma}
\label{lem:t47} There exists a tree $T$ such that for any positive
integer $k$, $\pi(T[E_k]) = 4k$.
\end{lemma}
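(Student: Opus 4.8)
The plan is to use the tree $T_{3,6}$ of \lref{ball} as a core and to enlarge it just enough that \lref{degree} forces all of its layers to be rainbow coloured. Let $T$ be obtained from $T_{3,6}$ by attaching pendant vertices until every vertex of $T_{3,6}$ has degree at least $4$ in $T$: hang one new leaf off the root and off each internal vertex of $T_{3,6}$, and three new leaves off each leaf of $T_{3,6}$. Since only pendant edges are added, $T_{3,6}$ is an induced subgraph of $T$, and therefore the subgraph of $T[E_k]$ induced by the layers over $V(T_{3,6})$ is exactly $T_{3,6}[E_k]$; in particular, restricting any nonrepetitive colouring of $T[E_k]$ to these layers yields a nonrepetitive colouring of $T_{3,6}[E_k]$.

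The upper bound is immediate. As $T$ is a tree, the Bar\'at and Wood bound quoted in the introduction gives $\pi(T[K_k]) \le 4k$, and hence $\pi(T[E_k]) \le \pi(T[K_k]) \le 4k$ by the chain of inequalities noted there.

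For the lower bound I would argue by contradiction. Suppose $c$ is a nonrepetitive colouring of $T[E_k]$ using at most $4k-1$ colours. The decisive observation is that every layer $v[E_k]$ with $v \in V(T_{3,6})$ is rainbow coloured: if some $v[E_k]$ contained two vertices of the same colour, then, since $v$ has degree at least $4$ in $T$, \lref{degree} would force $c$ to use at least $4k+1$ colours (for $k=1$ there is nothing to check). Consequently the restriction of $c$ to $T_{3,6}[E_k]$ is a \emph{rainbow} nonrepetitive colouring of $T_{3,6}[E_k]$ using at most $4k-1$ colours, contradicting \lref{ball}. Thus $\pi(T[E_k]) \ge 4k$, which together with the upper bound gives $\pi(T[E_k]) = 4k$ for every positive integer $k$. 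The only real idea is this reduction to \lref{ball}: $T_{3,6}$ on its own does not work, since a non-rainbow layer over a degree-$3$ vertex would only force $3k+1$ colours via \lref{degree}, and $4k-1$ colours can accommodate that; padding every vertex of $T_{3,6}$ up to degree $4$ closes exactly this gap while keeping $T$ a tree, so that the Bar\'at and Wood upper bound still applies.
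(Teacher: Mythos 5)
Your proof is correct and follows essentially the same strategy as the paper: use the Bar\'at--Wood bound for the upper bound, and for the lower bound combine \lref{degree} (to force the layers over a copy of $T_{3,6}$ to be rainbow) with \lref{ball}. The only difference is cosmetic — the paper takes $T=T_{4,7}$, inside which the non-leaf vertices automatically give a degree-$4$ copy of $T_{3,6}$, whereas you pad $T_{3,6}$ with pendant leaves to achieve the same effect.
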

\begin{proof}
Let $T=T_{4,7}$ be the rooted tree in which all non-leaf vertices
have degree four, and all leaves have distance $6$ from the root
vertex. As mentioned above, it was proved by Bar\'at and Wood
\cite{baratwood} that $\pi(T[E_k]) \le 4k$. Let $c$ be a
nonrepetitive coloring of $T[E_k]$. We shall show that at least $4k$
colors are used. If a subgraph of $T_{4,7}[E_k]$ isomorphic to
$T_{3,6}[E_k]$ is rainbow-colored, then we are done by \lref{ball}.
If not, then we are done by \lref{degree}.
\end{proof}

To prove the lower bounds of \tref{rainbow} and \tref{complete} we
need some preparations. Given a nonrepetitive sequence $S$ over $3$
letters $A,B,C$, by a \textit{palindrome} we mean a  subsequence of
$x_1\ldots x_{2l+1}$ of odd length $2l+1\ge 3$ such that $x_i =
x_{2l+2-i}$ for $i=1,2,\ldots, l$. The middle letter $x_{l+1}$ of a
palindrome is called a \textit{peak} of the sequence. In writing a
sequence we emphasize peaks by underlining them. The \textit{gap}
between two consecutive peaks is the number of letters between them
in $S$. For technical reasons, the first and last letter of a
sequence is also regarded as a peak. In other words, a letter is not
a peak if and only if its two neighbors exist and are different.
Two sequences are {\em equivalent} if they are the
same up to a permutation of the letters $A,B$ and $C$.

\begin{lemma}\label{lem:gap}
In a sequence $S$ over $3$ letters that avoids repetitions of length
at most $6$ each gap is at most $3$ and at least $1$, except the
first and the last gap that can be $0$.
\end{lemma}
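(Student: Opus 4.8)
The plan is to work directly with the structure of peaks and non-peaks, using the characterization stated just above the lemma: an interior letter $x_m$ (one with $1<m<N$, where $N$ is the length of $S$) is a non-peak precisely when $x_{m-1}\neq x_{m+1}$. Since $S$ avoids repetitions of length $2$ we also have $x_{m-1}\neq x_m\neq x_{m+1}$, so for every interior non-peak $x_m$ the triple $(x_{m-1},x_m,x_{m+1})$ is a permutation of $A,B,C$. This single observation drives both halves of the argument. Throughout I would write the peaks as $x_{p_0},x_{p_1},\dots ,x_{p_r}$ with $1=p_0<p_1<\dots <p_r=N$, so that the gaps are $g_t=p_{t+1}-p_t-1$, the first gap is $g_0$ and the last is $g_{r-1}$.

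For the lower bound I would suppose, for contradiction, that some gap other than $g_0$ and $g_{r-1}$ equals $0$, say $g_t=0$ with $1\le t\le r-2$. Then $x_{p_t}$ and $x_{p_t+1}=x_{p_{t+1}}$ are consecutive peaks, and both are interior, since $p_t\ge p_1\ge 2$ and $p_t+1\le p_{r-1}\le N-1$. Being an interior peak, $x_{p_t}$ forces $x_{p_t-1}=x_{p_t+1}$, and $x_{p_t+1}$ forces $x_{p_t}=x_{p_t+2}$; hence $x_{p_t-1}x_{p_t}x_{p_t+1}x_{p_t+2}=x_{p_t-1}x_{p_t}x_{p_t-1}x_{p_t}$ is a repetition of length $4$, which is excluded. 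So every gap except possibly the first and the last is at least $1$.

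For the upper bound I would suppose some $g_t\ge 4$, set $i=p_t$ and $j=p_{t+1}$, so that $j\ge i+5$ and $x_{i+1},\dots ,x_{j-1}$ are all non-peaks, all interior because $1\le i$ and $j\le N$. For each $m$ with $i+1\le m\le j-1$ the triple $(x_{m-1},x_m,x_{m+1})$ is a permutation of $A,B,C$ by the opening observation; comparing two consecutive such triples, $x_{m-1}$ and $x_{m+2}$ are both the unique letter missing from $\{x_m,x_{m+1}\}$, hence $x_{m-1}=x_{m+2}$ for $i+1\le m\le j-2$. Equivalently $x_n=x_{n+3}$ for all $i\le n\le j-3$, and since $g_t\ge 4$ gives $j-3\ge i+2$ this yields $x_i=x_{i+3}$, $x_{i+1}=x_{i+4}$ and $x_{i+2}=x_{i+5}$, so $x_ix_{i+1}x_{i+2}x_{i+3}x_{i+4}x_{i+5}$ is a repetition of length $6$, again excluded. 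Therefore every gap is at most $3$.

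The only delicate point I anticipate is the bookkeeping at the two ends: one must verify that the peaks invoked in each step are genuinely interior, so that the ``neighbours exist and are different'' criterion applies, and that the index window $i+1\le m\le j-1$ is long enough — it is precisely the hypothesis $g_t\ge 4$, i.e. $j\ge i+5$, that supplies the six consecutive positions needed to display the length-$6$ repetition, so the bound $3$ is exactly what the forbidden repetition length $6$ buys. Everything else follows immediately from the definitions.
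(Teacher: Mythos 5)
Your proof is correct and is essentially the same argument as the paper's, just carried out with explicit indices: the lower bound uses the fact that two adjacent interior peaks force $x_{p_t-1}=x_{p_t+1}$ and $x_{p_t}=x_{p_t+2}$ (a length-$4$ repetition), and the upper bound uses the fact that between peaks each letter is the unique one missing from its two successors, so the sequence is $3$-periodic and a gap of $4$ produces a length-$6$ repetition (the paper's $\ul{A}BCABC$). The paper states both observations informally; your version merely makes the indexing and the interior/endpoint bookkeeping explicit.
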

\begin{proof}
If there is a $0$ gap which is neither the  first gap nor the last
gap,   then there would be a repetition of length $4$ in $S$. To
prove that a gap is at most $3$, observe that between two peaks the
letters are determined by the first peak-letter $x$ and the letter
after $x$. Indeed, without loss of generality, if these letters are
$\ul{A}B$ then as $B$ is not a peak, the third letter is $C$. In
general the next letter is always the letter different from the
previous two letters until we reach the next peak. Thus if there
would be a gap of size $4$ then there would be a sequence equivalent
to $\ul{A}BCABC$ (the last letter may or may not be a peak), which
includes a repetition.
\end{proof}

\begin{lemma}\label{lem:pal}
In a sequence over $3$ letters, if $v$ is a peak with gap $g_1$ on
one side and $g_2 \ge g_1$ on the other side, then  it is the center
of a palindrome of length $2g_1+3$.
\end{lemma}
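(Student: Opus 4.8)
The plan is to exploit the rigidity already observed in the proof of \lref{gap}: between two consecutive peaks the sequence is completely determined. Precisely, if $x_i$ is \emph{not} a peak then its two neighbours exist and are different from each other, and since consecutive letters of a nonrepetitive sequence differ, $x_{i-1},x_i,x_{i+1}$ are three pairwise distinct letters; hence $x_{i+1}$ is the unique letter of the alphabet distinct from $x_{i-1}$ and $x_i$, and symmetrically $x_{i-1}$ is the unique letter distinct from $x_i$ and $x_{i+1}$.

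Place $v$ at position $0$ and write $S=\ldots x_{-1}x_0x_1\ldots$ with $x_0=v$, taking the side with gap $g_1$ to be the positive one, so that the first peak to the right of $v$ is $x_{g_1+1}$ and the first peak to its left is $x_{-(g_2+1)}$; in particular $x_1,\ldots,x_{g_1}$ are non-peaks and, since $g_2\ge g_1$, so are $x_{-1},\ldots,x_{-g_1}$. I would prove by induction on $j$ that $x_j=x_{-j}$ for all $0\le j\le g_1+1$; this is exactly the assertion that $x_{-(g_1+1)}x_{-g_1}\cdots x_0\cdots x_{g_1}x_{g_1+1}$ is a palindrome of length $2g_1+3$ with peak $v$. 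The case $j=0$ is trivial, and $j=1$ holds because $v$ has a neighbour on each side, so being a peak forces $x_{-1}=x_1$. For the inductive step, let $1\le j\le g_1$ and assume $x_{j-1}=x_{-(j-1)}$ and $x_j=x_{-j}$. Since $x_j$ is a non-peak, $x_{j+1}$ is the unique letter different from $x_{j-1}$ and $x_j$; since $x_{-j}$ is a non-peak (this is where $j\le g_1\le g_2$ is used), $x_{-(j+1)}$ is the unique letter different from $x_{-(j-1)}$ and $x_{-j}$. The two forbidden pairs agree by the inductive hypothesis, so $x_{j+1}=x_{-(j+1)}$, completing the induction.

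The whole argument is one short induction, and the only point that needs care is purely bookkeeping: checking that every index invoked really lies inside $S$ and that $x_{-1},\ldots,x_{-g_1}$ are genuinely non-peaks, both guaranteed by the hypothesis $g_2\ge g_1$. The degenerate case $g_1=0$ is covered by the base step alone, so there is no substantive obstacle beyond this.
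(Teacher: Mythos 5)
Your proof is correct and follows the same idea as the paper's: a non-peak letter over a three-letter alphabet with no immediate repetitions is forced to be the unique letter different from its two predecessors, so the sequence unfolds identically on both sides of the peak for as long as both sides remain non-peaks. The paper states this determinism informally and concludes directly; you have simply made the same argument explicit as a two-step induction, which is a valid formalization rather than a different route.
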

\begin{proof}
This follows again  from the fact that the peak and its neighbor
 determine all the letters until the next peak (on both sides). So
 going from $v$ to each side,  the $g_1+1$ letters are the same, and
 hence $v$ is the center
of a palindrome of length $2g_1+3$.
\end{proof}

\begin{lemma}\label{lem:peak}
Assume $S$ is a sequence on $3$ letters that avoids repetitions of
length at most $6$. If there are three consecutive gaps $g_1\ge
g_2\le g_3$, then there is a subsequence equivalent to one of the
following
\begin{enumerate}
\item CB\ul{A}B\ul{C}BA
\item ACB\ul{A}BC\ul{A}CBA
\item BACB\ul{A}BCA\ul{B}ACBA.
\end{enumerate}
\end{lemma}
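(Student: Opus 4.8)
The plan is to read the letters of $S$ off directly, using only the two structural facts already isolated in the proof of \lref{gap}: inside a run of $S$ between two consecutive peaks, every letter is the unique one distinct from its two predecessors, so such a run cycles through $A,B,C$ in a fixed direction; and at an \emph{interior} peak the two neighbouring letters coincide, so the sequence ``reflects'' there. Write $p_0,p_1,p_2,p_3$ for the four consecutive peaks that bound the three gaps, with $g_i$ the gap between $p_{i-1}$ and $p_i$. Since $p_0$ precedes $p_1$ and $p_3$ follows $p_2$, the gap $g_2$ is neither the first nor the last gap of $S$, so \lref{gap} gives $1\le g_2\le 3$; together with $g_2\le g_1\le 3$ and $g_2\le g_3\le 3$ (the upper bounds from \lref{gap}) this puts all three gaps in $\{1,2,3\}$.

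Next I would normalise by a permutation of $\{A,B,C\}$ so that $p_1=A$ and the letter immediately to the right of $p_1$ is $B$. By the two facts above, reading outward from $p_1$ in either direction the letters are $A,B,C,A,B,C,\dots$ in cyclic order until one reaches $p_0$ (at distance $g_1+1$ on the left) and $p_2$ (at distance $g_2+1$ on the right), after which the sequence reflects at $p_0$ and at $p_2$. In particular the whole neighbourhood of $p_1$ and $p_2$ is determined by the single number $g_2$, and the hypotheses $g_1\ge g_2$ and $g_3\ge g_2$ are exactly what is needed to guarantee that enough letters survive on each side of the window we want to exhibit.

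I would then split into three cases according to $g_2$. If $g_2=1$ then $p_2=C$, and reading from distance $-2$ to distance $+4$ about $p_1$ produces $CB\ul{A}B\ul{C}BA$; distance $-2$ is inside $S$ because $g_1\ge 1$, and $p_3$ lies at distance $g_3+3\ge 4$ because $g_3\ge 1$, so distance $+4$ is inside $S$ as well. If $g_2=2$ then $p_2=A$, and reading distances $-3$ through $+6$ produces $ACB\ul{A}BC\ul{A}CBA$, using $g_1\ge 2$ on the left and $g_3\ge 2$ on the right. If $g_2=3$ then the bounds force $g_1=g_3=3$, whence $p_2=B$, $p_3=A$, and reading distances $-4$ through $+8$ produces $BACB\ul{A}BCA\ul{B}ACBA$. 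Undoing the normalising permutation turns each of these strings into a subsequence of $S$ equivalent to the one claimed.

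I do not expect a genuine obstacle here: the argument is essentially bookkeeping once the two facts from \lref{gap} are in hand. The only points that need care are keeping the cyclic ``reflection'' accounting straight so that the forced strings come out exactly as in the statement, and checking in each case that the exhibited window of letters really lies inside the segment of $S$ from $p_0$ to $p_3$ — this second point is precisely where $g_2\le g_1$, $g_2\le g_3$ and $g_i\le 3$ get used. No repetition avoidance beyond what is already packaged into \lref{gap} enters the proof.
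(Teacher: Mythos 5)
Your proposal is correct and follows essentially the same route as the paper: reduce to $g_2\in\{1,2,3\}$ via Lemma~\ref{lem:gap}, then use the fact that letters between peaks are forced (cycling in a fixed direction) together with the reflection/palindrome property at an interior peak (Lemma~\ref{lem:pal}) to read off the three forced windows. The paper compresses the second step to ``it is easy to verify''; you supply the bookkeeping — the normalisation $p_1=A$, next letter $B$, the explicit letter-by-letter read-out, and the check that $g_2\le g_1$, $g_2\le g_3$, and $g_i\le 3$ guarantee the window stays inside the segment from $p_0$ to $p_3$ — which is exactly the content the paper leaves implicit.
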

\begin{proof}
By \lref{gap}, $g_2=1,2$ or $3$. By observing that letters between
two peaks are determined by the  peak-letters and the letter besides
the peak letters, it is easy to verify that if $g_2=1$
(respectively, $g_2=2$ or $g_2=3$), then the resulting subsequence
is as the first (respectively, the 2nd or the 3rd) listed above.
Note that the first and last letters in these sequences might be
also peaks.
\end{proof}

\begin{lemma}\label{lem:find}
Given a sequence $S$ of length $22$ on $3$ letters that avoids
repetitions of length at most $6$, there exist three consecutive
gaps $g_1\ge g_2\le g_3$.
\end{lemma}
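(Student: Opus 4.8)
The plan is to argue by contradiction: I would assume that no three consecutive gaps $g_i,g_{i+1},g_{i+2}$ of $S$ satisfy $g_i\ge g_{i+1}\le g_{i+2}$ and show that $S$ must then be shorter than $22$. First, the bookkeeping. If $S$ has $m$ peaks then it has $n:=m-1$ gaps, and since the first and last letters of $S$ are peaks, telescoping the distances between consecutive peaks gives $\sum_{i=1}^{n}(g_i+1)=21$, i.e.\ $\sum_{i=1}^{n}g_i=21-n$. By \lref{gap} every gap lies in $\{0,1,2,3\}$, so $g_i+1\le4$ and hence $n\ge6$.

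The structural core is to show that, under the contradiction hypothesis, the gap sequence is unimodal: strictly increasing up to some index $t$ and then strictly decreasing. I would take $t$ to be the largest index with $g_1<g_2<\cdots<g_t$; as these are distinct elements of $\{0,1,2,3\}$ we have $t\le4$, so with $n\ge6$ we get $t\le n-2$, putting $t+1$ in the interior range $\{2,\dots,n-1\}$ and $t+2$ in $\{1,\dots,n\}$. Maximality of $t$ forces $g_t\ge g_{t+1}$, and then the hypothesis at the interior position $t+1$ rules out $g_{t+1}\le g_{t+2}$, so $g_{t+1}>g_{t+2}$; iterating the observation that ``$g_{i-1}\ge g_i$ at an interior index $i$ forces $g_i>g_{i+1}$'' gives $g_{t+1}>g_{t+2}>\cdots>g_n$. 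Now both monotone runs $g_1,\dots,g_t$ and $g_{t+1},\dots,g_n$ consist of distinct values from $\{0,1,2,3\}$, so each has length at most $4$ and sum at most $0+1+2+3=6$. Hence $n\le8$ and $\sum_{i=1}^{n}g_i\le12$, contradicting $\sum_{i=1}^{n}g_i=21-n\ge13$.

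I expect the only delicate point to be the unimodality step: one must verify that the indices used lie in the interior range $\{2,\dots,n-1\}$ where the ``no weak valley'' hypothesis is available — this is precisely where $n\ge6$, and hence $t\le n-2$, is needed — and handle the negation cleanly, the neatest formulation being that as soon as a weak descent $g_{i-1}\ge g_i$ appears the sequence is strictly decreasing from $g_i$ onward. The two length and sum estimates on the monotone runs are then routine.
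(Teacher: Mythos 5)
Your proof is correct and follows the same strategy as the paper's: argue by contradiction, use \lref{gap} to restrict each gap to $\{0,1,2,3\}$ (with interior gaps $\ge 1$), and show the no-weak-valley hypothesis forces the gap sequence — and hence $S$ — to be too short. Your explicit unimodality argument (a strictly increasing run followed by a strictly decreasing run, each consisting of distinct values from $\{0,1,2,3\}$, giving $n\le 8$ and $\sum g_i\le 12$ against $\sum g_i=21-n\ge 13$) is a cleaner and more rigorous rendering of the paper's terse assertion that the longest admissible gap sequence is $0,1,2,3,3,2,1,0$.
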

\begin{proof}
By \lref{gap}, the series of gaps contains only the numbers
$0,1,2,3$. Suppose that the sequence $S$ does not contain  three
consecutive gaps $g_1\ge g_2\le g_3$.  Then $0$ can only be the
length of the first or the last gap, a gap of length $1$ must be
adjacent to a gap of length $0$, a gap of length $2$ must be
adjacent to a gap of length at most $1$, and a gap of length $3$
must be adjacent to a gap of length at most $2$.  The longest such
sequence of gaps is the following: $0,1,2,3,3,2,1,0$. Thus the
sequence can have length at most $12+9=21$ (the number of letters in
gaps plus the number of peak letters interceding them).
\end{proof}

\begin{lemma}
\label{lem:rlower}
For $k \ge 2$, $ \pi_R(P_{24}[E_k]) \ge 3k+1$.
\end{lemma}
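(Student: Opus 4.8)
Assume for contradiction that $c$ is a rainbow nonrepetitive coloring of $P_{24}[E_k]$ using at most $3k$ colors; the plan is to exhibit a repetition of length at most $6$. Denote by $S_1,\dots,S_{24}$ the color sets of the consecutive layers. Since $c$ is rainbow we have $|S_i|=k$, and avoiding repetitions of length $2$ forces $S_i\cap S_{i+1}=\emptyset$ for all $i$. By \lref{path4} every four consecutive layers contain an inner triple whose color sets are pairwise disjoint; as such a triple consists of three $k$-sets and only $3k$ colors are available, it in fact \emph{partitions} the color set — call such a triple \emph{good}. Hence there are no two consecutive non-good triples, and $c$ uses exactly $3k$ colors.

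I will use two basic observations. (i) If the triples on layers $j,j+1,j+2$ and on $j+1,j+2,j+3$ are both good, then $S_j$ and $S_{j+3}$ both equal the complement of $S_{j+1}\cup S_{j+2}$, so $S_j=S_{j+3}$; iterating, along any run of consecutive good triples the corresponding color sets are periodic with period $3$, cycling through the three blocks of one fixed partition. (ii) If there are six consecutive layers $j,\dots,j+5$ with $S_j\cap S_{j+3}$, $S_{j+1}\cap S_{j+4}$ and $S_{j+2}\cap S_{j+5}$ all nonempty, then choosing in each of these layers the unique vertex carrying a fixed color from the relevant intersection produces a path $w_1w_2\cdots w_6$ with $c(w_i)=c(w_{i+3})$: the six vertices lie in distinct layers, hence are distinct, and consecutive ones are adjacent because consecutive layers span a $K_{k,k}$ — a repetition of length $6$. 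Combining (i) and (ii), four consecutive good triples already force $S_j=S_{j+3}$, $S_{j+1}=S_{j+4}$, $S_{j+2}=S_{j+5}$, a contradiction; so every maximal run of good triples has length at most $3$.

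The heart of the argument is to encode the whole arrangement of $S_1,\dots,S_{24}$ as a word over three letters. By (i), each maximal good run fixes a partition $\{A,B,C\}$ and makes the corresponding layers cycle through $A,B,C$; a non-good triple $T$ is isolated, and comparing it with the two good triples flanking it shows that the two partitions belonging to the neighbouring good runs share a block and that the color sets in a short window around $T$ are forced into a palindromic shape, which moreover propagates left and right. Recording for every layer which of the (at most three relevant) blocks it carries yields a word $s$ of length $24$ over three letters with the property that equal letters correspond to equal color sets; consequently any repetition of at most $6$ consecutive letters in $s$ would, exactly as in (ii), lift to a repetition in $c$. Hence $s$ avoids repetitions of length at most $6$, so \lref{find}, \lref{peak} and \lref{pal} apply to it.

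To conclude, \lref{find} supplies three consecutive gaps $g_1\ge g_2\le g_3$ in $s$, \lref{peak} pins the local pattern down to one of the three explicit sequences listed there, and \lref{pal} extracts a long palindrome, i.e. a window $S_{p-\ell},\dots,S_{p+\ell}$ with $S_{p-i}=S_{p+i}$. Since $24$ layers leave enough room on both sides of $p$, this palindromic window together with the $3$-periodic block pattern it inherits yields six consecutive layers meeting the hypothesis of (ii), hence the sought repetition, a contradiction. The step I expect to be the real obstacle is the encoding: one must describe precisely how the three blocks can rotate across a non-good triple, verify that the resulting length-$24$ word is genuinely nonrepetitive up to length $6$ over three letters, and check that the palindrome produced by \lref{peak}--\lref{pal} does lift to a repetition of length at most $6$ in $c$ (existence of the required vertices, matching colors, distinctness). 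This is also where the hypothesis $k\ge 2$ enters, since for $k=1$ Thue's ternary sequence shows the statement fails.
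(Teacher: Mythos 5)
Your high-level strategy matches the paper's Case~B: reduce to a length-$\le 24$ word over three letters, apply Lemmas~\ref{lem:find} and~\ref{lem:peak}, then lift the resulting pattern to a repetitive path. Your observation~(ii) and the bound on runs of ``good'' triples are correct. But you have flagged two obstacles, and both are genuine gaps rather than details to be checked.

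The first and more serious one is the encoding. Your argument tacitly assumes that across a non-good triple $(S_j,S_{j+1},S_{j+2})$ the two flanking partitions coincide, so that every layer carries one of three fixed blocks. That holds when $S_j=S_{j+2}$, but not in general: it is perfectly consistent with Lemma~\ref{lem:path4} that $S_j\cap S_{j+2}\ne\emptyset$ while $S_j\ne S_{j+2}$. In that situation the partitions $\{S_{j-1},S_j,S_{j+1}\}$ and $\{S_{j+1},S_{j+2},S_{j+3}\}$ share only the block $S_{j+1}$, there are more than three distinct $k$-sets in play, and the ``palindromic shape that propagates'' you invoke simply does not exist. The paper isolates exactly this situation as Case~A and rules it out by a bespoke six-layer argument producing an explicit length-8 repetition (the colors $f,b,a,h$). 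Your proof contains no substitute for that argument, and without it the word~$s$ you want to build is not well defined.

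The second gap is the lift. Once one is in the good situation (a single partition $\{A,B,C\}$ and a repetition-free ternary word), your observation~(ii) does not directly give a repetition: the word is repetition-free, so a pattern like $ABCABC$ never appears, and the subsequences supplied by Lemma~\ref{lem:peak} (e.g. $CB\underline{A}B\underline{C}BA$) do not present six layers whose sets meet in the way (ii) requires. The paper instead constructs a path that doubles back one step before proceeding forward, producing, say, the color pattern $b_1c_1b_2a_1b_1c_1b_2a_1$, which requires choosing two distinct colors $b_1\ne b_2$ from the same block; this is precisely where $k\ge 2$ is used. Your sketch recognizes that $k\ge 2$ must enter somewhere, but observation~(ii) is the wrong tool and you have not described the back-and-forth construction that actually completes the proof.
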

\begin{proof}
Let  $P_{24}=p_1p_2\dots p_{24}$ and $G=P_{24}[E_k]$. For simplicity
we denote the layer corresponding to $p_i$ by $V_i$. Suppose $G$ has
a nonrepetitive rainbow $3k$-coloring. By \lref{path4}, all $3k$
colors are used. We distinguish two cases.

\vskip 0.2truecm

\textsc{Case A}: There exists an index  $2\le j\le 21$ such that
$c[V_j]\ne c[V_{j+2}]$ and $c[V_{j}] \cap c[V_{j+2}]\neq \emptyset$.

\vskip 0.2truecm

Suppose first that $2 \le j\le 19$. Let $b$ be a color in $c[V_j]
\cap c[V_{j+2}]$. By \lref{path4}, $c[V_{j-1}] \cap
c[V_{j+1}]=\emptyset$ and $c[V_{j+1}] \cap c[V_{j+3}]=\emptyset$.

As both $\{c[V_{j-1}],c[V_{j}],c[V_{j+1}]\}$ and
$\{c[V_{j+1}],c[V_{j+2}],c[V_{j+3}]\}$ partition the colors into $3$
parts of size $k$ and $c[V_{j}]\ne c[V_{j+2}]$, there exist colors
$d\in c[V_{j}]\cap c[V_{j+3}]$, $e\in c[V_{j-1}]\cap c[V_{j+2}]$ and
$f\in c[V_{j-1}]\cap c[V_{j+3}]$. Now $c[V_{j+4}]$ must be disjoint
from $c[V_{j+1}]$, as a color $a$ appearing in both $c[V_{j+1}]$ and
$c[V_{j+4}]$ would yield a repetition $edaeda$ of colors on
$c[V_{j-1}]$, $c[V_{j}]$, $c[V_{j+1}]$, $c[V_{j+2}]$, $c[V_{j+3}]$,
$c[V_{j+4}]$. As $c[V_{j+4}]$ is also disjoint from $c[V_{j+3}]$ we
must have $c[V_{j+4}]=c[V_{j+2}]$. As $k \ge 2$, there are colors
$b, h\in c[V_{j+2}] = c[V_{j+4}]$.

Now, $c[V_{j+5}]$ is disjoint from $c[V_{j+4}]$ and also disjoint
from $c[V_{j+3}]$ (as otherwise there would be a repetition $hdhd$).
Thus $c[V_{j+5}]=c[V_{j+1}]$. Picking a color $a \in
c[V_{j+5}]=c[V_{j+1}]$ we obtain a repetitively colored path
$v_{-1},v_{0},v_1,v_2,v_{3},v_{4},v_{5},v'_{4}$ ($v_i\in V_{j+i}$
and $v'_4\in V_{j+4}$) with colors $fbahfbah$, a contradiction.

This proof works only if $2 \le j\le 19$, as we used the existence
of $V_{j-1},\dots ,V_{j+5}$. Yet a symmetric reasoning works in case
$6 \le j\le 23$, thus covering the whole range of possible values of
$j$.

\vskip 0.2truecm

\textsc{Case B}: For each $2\le j\le 21$, either
$c[V_{j}]=c[V_{j+2}]$ or $c[V_{j}]\cap c[V_{j+2}]=\emptyset$.

\vskip 0.2truecm

First we prove that there exists a partition $A \cup B \cup C$ of
the $3k$ colors such that for every $2\le j\le 21$,  $c[V_j]=A$ or $
B$ or $ C$. Indeed, write $A=c[V_2]$ and $B=c[V_3]$. We prove by
induction that for every $4\le j\le 21$, $c[V_j]$ equals to one of
$A,B,C$. To avoid repetitions of size two $c[V_{j}]$ must be
disjoint from $c[V_{j-1}]$ and if it is not the same as
$c[V_{j-2}]$, then by the assumption of \textsc{Case B},  $c[V_j]
\cap c[V_{j-2}]=\emptyset$. As there are only $3k$ available colors,
$c[V_{j}]$ must be equal to the third color set (the one different
from $c[V_{j-1}]$ and $c[V_{j-2}]$, which by induction are two color
sets from $A,B,C$).

Thus the coloring of the layers from $j=2$ to $j=23$ can be regarded
as a sequence on the three letters $A,B,C$, which has length $22$.
Observe that this sequence is repetition-free,  as otherwise there
would be a repetitive path in the coloring of the original graph. By
\lref{peak} and \lref{find}, there is a subsequence of the form
$CBABCBA$ or $ACBABCACBA$ or $BACBABCABACBA$.

Each of $A,B,C$ contains $k \ge 2$ colors. Let $a_1, a_2$
(respectively, $b_1, b_2$ and $c_1, c_2$) be two distinct colors in
$A$ (respectively, $B$ and $C$). Then a path of color sequence
$b_1c_1b_2a_1b_1c_1b_2a_1$ can be found from the parts with color
sequence  $CBABCBA$. Indeed, to find this start from the second part
(which has color set $B$), go to the first part (which has color set
$A$), then follow the original path to the end. Similarly, paths of
color sequences $b_1c_1a_1c_2b_2a_1b_1c_1a_1c_2b_2a_1$ and
$b_1c_1a_1b_1a_2c_2b_2a_1b_1c_1a_1b_1a_2c_2b_2a_1$ can be found from
the parts with color sequence $ACBABCACBA$ and $BACBABCABACBA$,
respectively.
\end{proof}

\begin{theorem}
\label{thm:clower}
For any integer $k \ge 1$, $\pi(P_{28}[K_k])\ge 3k+\lfloor k/2\rfloor$.
\end{theorem}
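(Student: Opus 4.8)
The plan is to follow the two-case structure of the proof of \lref{rlower}, but to replace the \emph{equalities} of layer colour sets that were forced there by the weaker \emph{large overlaps} that a smaller colour budget still forces, and then to exploit the extra routing freedom available in $P_n[K_k]$: a path in $P_n[K_k]$ projects to a \emph{walk} on the layers whose steps lie in $\{-1,0,+1\}$, the $0$-steps using the edges inside a copy of $K_k$. The case $k=1$ is Thue's theorem ($P_{28}[K_1]=P_{28}$), so assume $k\ge 2$. Since any two vertices inside one layer are adjacent, every layer of a nonrepetitive colouring $c$ of $P_{28}[K_k]$ is rainbow, so $c$ is in particular a rainbow nonrepetitive colouring and \lref{rlower} already gives $\pi(P_{28}[K_k])\ge 3k+1$; we must gain $\lfloor k/2\rfloor$ more. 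Suppose for contradiction $c$ uses $N\le 3k+\lfloor k/2\rfloor-1<\tfrac72 k$ colours, and write $S_i:=c[V_i]$. I record the elementary facts I will lean on: $S_i\cap S_{i+1}=\emptyset$; no four consecutive $S_i$ are pairwise disjoint (four pairwise disjoint $k$-sets use $4k>N$ colours); there is no index with $S_i=S_{i+2}$ and $S_{i+1}=S_{i+3}$ (otherwise a path through $V_iV_{i+1}V_{i+2}V_{i+3}$ would be coloured $\alpha\beta\alpha\beta$); and, crucially, any two $k$-subsets of the palette that both avoid a fixed $2k$-subset intersect in at least $4k-N\ge\lceil k/2\rceil+1$ colours.

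Now split as in \lref{rlower}: \textsc{Case A}, some $2\le j\le 25$ has $S_j\ne S_{j+2}$ and $S_j\cap S_{j+2}\ne\emptyset$; \textsc{Case B}, every distance-$2$ pair of layers is equal or disjoint. In \textsc{Case A} I rerun the computation of \lref{rlower}: from $S_j\cap S_{j+2}\ne\emptyset$ and \lref{path4} one gets $S_{j-1}\cap S_{j+1}=S_{j+1}\cap S_{j+3}=\emptyset$, and the same repetition-avoidance arguments force $S_{j+4}$ to be disjoint from the $2k$-set $S_{j+1}\cup S_{j+3}$; hence by the overlap fact $|S_{j+2}\cap S_{j+4}|\ge\lceil k/2\rceil+1$. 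Either this is an equality $S_{j+2}=S_{j+4}$ — a degenerate situation that I close directly with a short repetition built using a $0$-step at $V_{j+1}$ or $V_{j+3}$ — or $(S_{j+2},S_{j+4})$ is again a properly overlapping distance-$2$ pair, and \lref{path4} supplies the disjointnesses needed to repeat the step. Iterating along the $28$ available layers produces interleaved runs $S_j,S_{j+2},S_{j+4},\dots$ (consecutive terms overlapping in more than $k/2$ colours) and $S_{j+1},S_{j+3},\dots$ (consecutive terms disjoint, hence eating $2k$ fresh colours). Because a more-than-$k/2$ overlap is transitive enough — two $k$-sets that each meet a common $k$-set in more than $k/2$ colours must meet each other — the first run cannot stay injective for long under the budget $N<\tfrac72 k$, so two of its terms coincide within a bounded number of steps, and the near-periodic layer pattern between them, folded with the $\{-1,0,+1\}$-walk, yields a repetitive path.

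In \textsc{Case B} the sets $S_2,\dots,S_{27}$ form a sequence of $k$-sets in which consecutive terms are disjoint, terms two apart are equal or disjoint, and no period-$2$ block occurs; via \lref{path4} there is a pairwise disjoint triple of consecutive layers, with a $3k$-colour palette. I would run the gap analysis of \lref{gap}, \lref{pal}, \lref{peak} and \lref{find} on the induced length-$26$ sequence of colour classes to produce a short palindromic window $S_{j-g}=S_{j+g},\ S_{j-g+1}=S_{j+g-1},\dots$ with $g\le 3$ (at most $13$ layers involved); separately, walking away from the disjoint triple and using $N<\tfrac72 k$ together with the absence of period-$2$ blocks, one finds a layer $V_c$ close to the triple with $|S_c\cap A|>k/2$ for one class $A$ of the triple. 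The repetitive path is then obtained by folding the palindrome at its centre across a single edge inside the central $K_k$ and routing one half of the period so that a repeated colour is realized once inside the triple and once in $V_c$; the $>k/2$ overlap is exactly what guarantees that the two halves of the period can be carried by distinct vertices, and this is the point at which the threshold $\lfloor k/2\rfloor$ enters.

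The step I expect to be the main obstacle is this last one in both cases: turning a ``palindromic or distance-$2$ layer pattern together with a $>k/2$ colour overlap'' into an explicit repetitive path with all vertices distinct and all consecutive pairs adjacent, and checking that $28$ layers really accommodate every sub-configuration produced by \lref{peak} and \lref{find} and their mirror images. The colour-counting inputs are routine once the right path has been identified.
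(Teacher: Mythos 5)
Your proposal correctly identifies the threshold role of $\lceil k/2\rceil+1$ (any two $k$-sets avoiding a fixed $2k$-set of an $N$-palette overlap in $\ge 4k-N\ge\lceil k/2\rceil+1$ colours) and correctly points at \lref{peak} and \lref{find} as the combinatorial engine; these are indeed the right ingredients. But the overall skeleton of your argument does not match the structure the problem requires, and the places you flag as ``main obstacles'' are exactly where the real content of the paper's proof lives, not routine checking.

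The decisive difficulty is that the Case~A\,/\,Case~B split borrowed from \lref{rlower} is not a dichotomy here. In the rainbow setting with exactly $3k$ colours, \lref{path4} forces $S_{j-1}\cap S_{j+1}=\emptyset=S_{j+1}\cap S_{j+3}$ to pin $S_{j+2}$ and $S_{j+4}$ down to literal equalities, after which the sequence really does degenerate to a length-$22$ word over a $3$-letter alphabet. With $N=3k+\lfloor k/2\rfloor-1$ colours nothing forces any equalities, so Case~B (``all distance-$2$ pairs equal or disjoint'') is not the complement of your Case~A; generic colourings satisfy neither. The paper's proof avoids this split entirely: it defines a labelling of the layers by $\{A,B,C\}$ where $X_{i+1}$ inherits the label of the latest $X_j$ it is \emph{rich} with, and the whole weight of the argument is carried by Claim~\ref{clm-new} (proved via Lemmas~\ref{lem:def}, \ref{lem:defg1}, \ref{lem:defg2}), a detailed $9$-layer case analysis showing that this labelling is well-defined, extends $3$ or $4$ layers at a time, and always lands on a pairwise-disjoint triple so it can be iterated. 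You have no analogue of this claim, and your ``iterating along the layers'' step in Case~A is exactly where such a claim would be needed.

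Your transitivity argument is also quantitatively too weak to carry the load you put on it. Two $k$-sets each rich with a common $k$-set only intersect in at least $2(\lceil k/2\rceil+1)-k\ge 2$ colours --- this is precisely \pref{useful} in the paper --- not ``more than $k/2$'' colours. So in your interleaved run $S_j,S_{j+2},S_{j+4},\dots$ the non-adjacent terms are only guaranteed $\ge 2$ common colours, which gives no injectivity obstruction from the $N<\tfrac72 k$ budget, and no coincidence of terms ``within a bounded number of steps.'' The paper needs this $\ge 2$ bound only locally, to pick two distinct vertices realising a repeated colour across a gap of $\le 3$ layers with the same label (Observation~\ref{obs-new}), after which the lift of a $CBABCBA$-, $ACBABCACBA$- or $BACBABCABACBA$-type window to a concrete repetitive path still requires the extra $B_1$-rich\,/\,$A_1$-rich statement of Claim~\ref{clm:rich}. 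None of this appears in your sketch. In short, the gap/palindrome machinery and the richness threshold are the right tools, but the missing idea is the well-definedness of a rich-inheritance labelling (Claim~\ref{clm-new}), which replaces both your Case~A iteration and your Case~B equality assumption; without it, the reduction to a $3$-letter word does not go through.
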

\begin{proof}
Assume to the contrary that there is a nonrepetitive coloring $c$ of
$G=P_{28}[K_k]$ with  $3k+\lfloor k/2\rfloor -1$ colors. The
vertices of $P_{28}$ are $v_1, v_2, \ldots, v_{28}$. Let $X_i =
c(v_i[K_k])$. So each $X_i$ is a $k$-subset of the $3k+\lfloor
k/2\rfloor -1$ colors. For the remainder of this proof,    a
\textit{set of colors} means a $k$-subset of the set of the
$3k+\lfloor k/2\rfloor -1$ colors. For two sets of colors $X$ and
$Y$, we say $X$ is \textit{$Y$-rich} (and $Y$ is $X$-rich) if
$|X\cap Y|\ge \lceil k/2+1\rceil$. We write $XYZ \ttt$ if $X,Y,Z$
are three pairwise disjoint color sets, and write $XYZW \qq$ if $XYZ
\ttt$ and $YZW \ttt$. We shall frequently use the following
observation.
\begin{prop}
\label{prop:useful} If $Y$ is $X$-rich and $Z$ is $Y$-rich then
$|X\cap Z|\ge 2$. If $XYZW \qq$ then $W$ is $X$-rich.
\end{prop}

\begin{claim}
\label{clm-new} Assume $P_9[K_k]$ is nonrepetitively colored with
$3k+\lfloor k/2\rfloor -1$ colors, and the color sets of the layers
are $XYABCDEFG$ and  $ABC \ttt$.
\begin{enumerate}
\item[(1)] If  $DEF \ttt$, then
$D, E, F$ are either $B,A,C$-rich respectively, or $A,C,B$-rich,
respectively.
\item[(2)] If $D \cap F \ne \emptyset$, then $EFG \ttt$ and one of the following holds:
\begin{itemize}
\item[(i)]  $F$ is $D$-rich and
$D, E, F, G$ are $A,B,A,C$-rich, respectively.
\item[(ii)]
$G$ is $D$-rich and $D, E, F, G$ are $B,A,C,B$-rich, respectively.
\end{itemize}
\end{enumerate}
\end{claim}

The proof of this claim is postponed to the next subsection. Now we use this claim and
continue with the proof of \tref{clower}.

We (partially) label the sequence $X_3X_4\ldots X_{28}$ by three
labels as follows: The first three consecutive pairwise disjoint
color sets are labeled $A,B,C$, respectively. In  other words, if
$X_3X_4X_5 \ttt$, then $X_3, X_4, X_5$ are labeled $A,B,C$,
respectively. Otherwise,  $X_4X_5X_6 \ttt$, then $X_4, X_5, X_6$ are
labeled $A,B,C$, respectively, and $X_3$ is unlabeled. Suppose we
have already labeled  $X_3X_4\ldots X_{i}$ (with $X_3$ possibly
unlabeled).   Let $j$ be the largest index such that $j \le i$ and
$X_{i+1}$ is $X_j$-rich. We label $X_{i+1}$ the same label as $X_j$.
By Claim \ref{clm-new}, we can label three or four consecutive color
sets simultaneously at each step. Note that by using Claim
\ref{clm-new} to label three or four consecutive color sets, the
last three consecutive color sets are always pairwise disjoint. So
we can repeatedly apply
 Claim \ref{clm-new} to label the next three or
four consecutive color sets. Thus the labeling
is well-defined,  except possibly the
 last three color sets are unlabeled.

Denote by $S$ the label sequence constructed above, which has length
at least $22$ (the first color set and the last three color sets may
not be labeled). The following observation follows from the
definition.

\begin{observation}
\label{obs-new}
If two color sets $X_i$ and $X_j$ have the same label and there is at most one other
color set between them that gets the same label, then $|X_i \cap X_j| \ge 2$.
\end{observation}

In particular,  if $|i-j| \le 3$ and $X_i$ and $X_j$ have the same label,  then $|X_i \cap X_j| \ge 2$.
Therefore, if $S$ has a repetition of length at most $6$, then it
 yields a repetitive path in $G$ of length at most $6$ along the corresponding layers.
 Thus $S$ contains no repetition of length at most $6$.
By \lref{peak} and \lref{find}, there exists a subsequence $S'$ that
is equivalent to one of the following sequences:

\vskip 0.2truecm

\textsc{Case (i)} $S'=CB\ul{A}B\ul{C}BA$

We write the sequence of color sets corresponding to $S'$ as
$CBAB_1C_1B_2A_1$. By   Observation \ref{obs-new}, there is a
repetitive path in $G$ with colors  $cbab'cbab'$ where $c\in
C,C_1;b\in B,B_2;a\in A,A_1,b'\in B_1,B_2$.

\vskip 0.2truecm

\textsc{Case (ii)} $S'=ACB\ul{A}BC\ul{A}CBA$

We write the sequence of color classes of the layers corresponding
to $S'$ as $ACBA_1B_1C_1A_2C_2B_2A_3$. Again it follows from
Observation \ref{obs-new} that there is a repetitive path in  $G$
with colors $acba'b'c'acba'b'c'$ where $a\in A,A_2;c\in C,C_2;b\in
B,B_2,a'\in A_1,A_3,b'\in B_1,B_2,c'\in C_1,C_2$.

\vskip 0.2truecm

\textsc{Case (iii)} $S'=BACB\ul{A}BCA\ul{B}ACBA$

We write the sequence of color sets corresponding to $S'$ as
$B_0A_0C_0B_1A_1B_2C_1A_2B_3A_3C_2B_4A_4$.

We claim that there is a repetitive path in $G$ with colors
$bacb'a'b''c'a''bacb'a'b''c'a''$ where $b\in B_0,B_3;a\in
A_0,A_3;c\in C_0,C_2;b'\in B_1,B_4,a'\in A_1,A_4,b''\in
B_2,B_4,c'\in C_1,C_2;a''\in A_2,A_3$. For this purpose, it suffices
to show that in each pair of layers from which we need to pick
vertices with the same color, we have at least two possible choices.
This follows from  \pref{useful} if the two layers correspond to
$Y_i$ and $Y_{i+1}$ or $Y_i$ and $Y_{i+2}$ for some letter $Y \in
\{A,B,C\}$. There are some pairs of the form $Y_i$ and $Y_{i+3}$
with $Y \in \{A, B\}$ for which we need to  pick vertices with the
same color. Hence we need to show that $|Y_i \cap Y_{i+3}| \ge 2$
for these pairs. For this purpose, by  \pref{useful},
 it suffices to show that
either $Y_i$ is $Y_{i+2}$-rich or $Y_{i+1}$ is $Y_{i+3}$-rich.  The
required properties follow from the following claim.

\begin{claim}\label{clm:rich}
$B_1$ is $B_3$-rich and $A_1$ is $A_3$-rich.
\end{claim}

\begin{proof}[Proof of Claim]
Consider the reverse of the subsequence
$C_0B_1A_1B_2C_1A_2B_3A_3$. Since $A_2$ is $A_3$-rich, by \lref{path4}, $C_1,A_2,B_3$ are pairwise disjoint.
Apply Claim \ref{clm-new} to the reverse of $C_0B_1A_1B_2C_1A_2B_3$, we conclude that $B_1$ is $B_3$-rich.
Similarly, by \lref{path4}, $A_1,B_2,C_1$ are pairwise disjoint, and apply Claim \ref{clm-new} to
$A_1B_2C_1A_2B_3A_3C_2$, we know that $A_1$ is $A_3$-rich.
\end{proof}

This completes the proof of \tref{clower} (except that the proof of
Claim \ref{clm-new} will be given in the next subsection).
\end{proof}

\subsection{Proof of Claim \ref{clm-new}}

Claim \ref{clm-new} follows from the following three lemmas.

\begin{lemma}
\label{lem:def}
Assume $P_6[K_k]$ is nonrepetitively colored with $3k+\lfloor k/2\rfloor -1$ colors, and the color
sets of the layers are $ABCDEF$. If $ABC \ttt$ and $DEF \ttt$, then
$D, E, F$ are either $B,A,C$-rich respectively, or $A,C,B$-rich respectively.
\end{lemma}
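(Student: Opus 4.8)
The plan is to read everything through the counting constraint forced by having only $N:=3k+\lfloor k/2\rfloor-1$ colors. Put $\Delta:=N-3k=\lfloor k/2\rfloor-1$, so that $A\cup B\cup C$ and $D\cup E\cup F$ each omit exactly $\Delta$ colors and ``$X$ is $Y$-rich'' means precisely $|X\cap Y|\ge k-\Delta$. A nonrepetitive coloring cannot repeat a color on two adjacent vertices, so consecutive layers have disjoint color sets; in particular $C\cap D=\emptyset$. Two reductions are free. First, reversing $P_6$ (sending layer $i$ to layer $7-i$) carries a nonrepetitive coloring to a nonrepetitive coloring, replaces the color word by $F,E,D,C,B,A$, and interchanges the two alternatives in the conclusion; so it suffices to prove one of them under one extra hypothesis. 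Second, applying \lref{path4} to the subpath $v_2v_3v_4v_5$ (legitimate, since $P_6[E_k]$ is a spanning subgraph of $P_6[K_k]$) shows $\{B,C,D\}$ or $\{C,D,E\}$ is pairwise disjoint, i.e. $B\cap D=\emptyset$ or $C\cap E=\emptyset$. Using reversal I may assume $B\cap D=\emptyset$, and I will show $D,E,F$ are then $A,C,B$-rich respectively.

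So assume $B\cap D=\emptyset$. Then $B,C,D$ are pairwise disjoint, so $ABCD\qq$, whence $D$ is $A$-rich by \pref{useful}. I next claim $C\cap E\neq\emptyset$: if $C\cap E=\emptyset$ then $CDE\ttt$, so $BCDE\qq$ and $CDEF\qq$, giving that $E$ is $B$-rich and $F$ is $C$-rich; choosing $a\in A\cap D$, $b\in B\cap E$, $c\in C\cap F$ produces the path $v_1v_2v_3v_4v_5v_6$ (one vertex per layer) colored $abcabc$, a repetition — contradiction. Hence $C\cap E\neq\emptyset$.

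Now I show $B\cap E=\emptyset$. If not, take $a\in A\cap D$ (nonempty, $D$ being $A$-rich), $b\in B\cap E$, $c\in C\cap E$; since $b\neq c$, the walk through layers $1,2,3,4,5,5$ colored $a,b,c,a,b,c$ — whose last two vertices are the distinct $E$-colored vertices $b$ and $c$ of layer $5$ — is a genuine path with repetitive color word $abcabc$, again a contradiction. So $B$ is disjoint from $C$, $D$ and $E$, hence $B$ sits inside the complement of $C\cup D\cup E$, a set of size $N-|C\cup D\cup E|=\Delta+|C\cap E|$ (using $C\cap D=D\cap E=\emptyset$). Therefore $k=|B|\le\Delta+|C\cap E|$, i.e. $|C\cap E|\ge k-\Delta$, so $E$ is $C$-rich. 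Likewise $B$ is disjoint from $D$ and $E$, so $B$ lies in the complement of $D\cup E$; this complement has size $k+\Delta$ and contains the $k$-set $F$ together with the $\Delta$ colors outside $D\cup E\cup F$, hence equals their disjoint union. Thus $|B\setminus F|\le\Delta$, so $|B\cap F|\ge k-\Delta$ and $F$ is $B$-rich. This establishes the alternative ``$D,E,F$ are $A,C,B$-rich,'' and applying this conclusion to the reversed coloring settles the remaining case ``$C\cap E=\emptyset$,'' yielding the alternative ``$D,E,F$ are $B,A,C$-rich.''

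The substantive point — the only place anything beyond the $\cal Q$-bookkeeping of \pref{useful} is needed — is converting the purely ``emptiness'' facts into the quantitative richness conclusions. The mechanism is that once $B\cap C$, $B\cap D$, $B\cap E$ are all forced empty, the \emph{entire} $k$-set $B$ is squeezed into the complement of $C\cup D\cup E$, a set of exactly controlled size; this simultaneously forces $|C\cap E|$ to be large and (via $B$ being contained in $F$ together with the $\Delta$ colors missed by $D\cup E\cup F$) forces $|B\cap F|$ to be large. I expect the care to go into verifying the bespoke length-$6$ path through layers $1,2,3,4,5,5$ is an honest path (the two visits to layer $5$ get different colors, one in $B$, one in $C$) so that it genuinely contradicts nonrepetitiveness; this is also where $k\ge 2$ is used, which is automatic since $ABC\ttt$ already requires $3k\le N$.
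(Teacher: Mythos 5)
Your proof is correct and it organizes the argument genuinely differently from the paper's. The paper's proof of \lref{def} branches into three cases according to whether $D\cap A$ or $D\cap B$ is empty (an exhaustive split since $D$ is disjoint from $C$ and cannot fit in the $\lfloor k/2\rfloor-1$ leftover colors), and within each case extracts the richness claims from \pref{useful} applied to various four-term sequences in $\cal Q$, killing bad subcases with paths colored $abcabc$. You instead apply \lref{path4} to the interior subpath $v_2v_3v_4v_5$ to obtain the dichotomy $B\cap D=\emptyset$ or $C\cap E=\emptyset$, observe that the reversal $v_i\mapsto v_{7-i}$ preserves the hypotheses and exchanges both these two conditions and the two alternatives in the conclusion, and so reduce to the single case $B\cap D=\emptyset$. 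The remaining work then essentially coincides with the paper's Case~2: your all-six-layers $abcabc$ path (rejecting $C\cap E=\emptyset$) and your path through layers $1,2,3,4,5,5$ (rejecting $B\cap E\ne\emptyset$) are the same two obstructions the paper uses there, and your direct counting of $|C\cap E|$ and $|B\cap F|$ against the slack $\lfloor k/2\rfloor-1$ is an unrolled equivalent of invoking \pref{useful} on $CBDE\qq$ and $BDEF\qq$. What your route buys is that the path-reversal symmetry absorbs the paper's Cases~1 and~3 outright; what it costs is the bookkeeping of verifying that reversal really does swap the two alternatives, which you carry out correctly.
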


\begin{proof}
We consider three cases.

\vskip 0.2truecm

\textsc{Case 1}: $D\cap A=\emptyset$.
\vskip 0.2truecm

$BACD \qq$ implies that $D$ is $B$-rich.
As $D \cap B \ne \emptyset$, by \lref{path4},  $E \cap C = \emptyset$.
Now $ACDE \qq$,  implies that $E$ is $A$-rich, and
$CDEF \qq$ implies that  $F$ is $C$-rich.

\vskip 0.2truecm

\textsc{Case 2}: $D\cap B=\emptyset$.
\vskip 0.2truecm

$ABCD \qq$ implies that $D$ is $A$-rich. If $E$ intersects both $B$
and $C$, then there is a repetitive path  $abcabc$ where $a\in A,
D$, $b\in B,E$ and $c\in C,E$, a contradiction. If $E$ is disjoint
from $B$, then $CBDE \qq$ implies that  $E$ is $C$-rich, and $BDEF
\qq$ implies that $F$ is $B$-rich. So $D,E,F$ are $A,C,B$-rich,
respectively, and we are done. If $E$ is disjoint from $C$, then
$BCDE \qq$ implies that $E$ is $B$-rich, and $CDEF \qq$ implies that
$F$ is $C$-rich. But then there is a repetition $abcabc$, $a \in
A,D; b\in B,E;c\in C,F$.

\vskip 0.2truecm

\textsc{Case 3}: $D\cap A\ne \emptyset$ and $D \cap B \ne \emptyset$.
\vskip 0.2truecm

In this case, $E \cap C = \emptyset$, for otherwise there is a
repetition $bcbc$, $b\in B,D;c\in C,E$. Now $CDEF \qq$ implies that
$F$ is $C$-rich. This implies that $E\cap B = \emptyset$, for
otherwise there would be a repetition $abcabc$, $a \in A,D; b\in
B,E;c\in C,F$. Then  $ABCE \qq$ implies that  $E$ is $A$-rich, and
$BCED \qq$ implies that  $D$ is $B$-rich.
\end{proof}

\begin{lemma}
\label{lem:defg1}
Assume $P_7[K_k]$ is nonrepetitively colored with $3k+\lfloor k/2\rfloor -1$ colors, and the color
sets of the layers are $ABCDEFG$. If $ABC \ttt$ and $D \cap F \ne \emptyset$, then
$D, E, F, G$ are either $F,B,A,C$-rich respectively, or $B,A,C,B$-rich respectively.
\end{lemma}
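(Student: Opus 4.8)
The plan is to run a case analysis in the spirit of the proof of \lref{def}, the difference being that the hypothesis $D\cap F\ne\emptyset$ destroys the pairwise disjointness of $D,E,F$, so the clean step ``$CDEF\qq$, hence $F$ is $C$-rich'' of \lref{def} is unavailable and locating $F$ and $G$ becomes the real work. I first record the deductions that hold throughout. Consecutive layers are disjoint, so $A\cap B=B\cap C=C\cap D=D\cap E=E\cap F=F\cap G=\emptyset$; applying \lref{path4} to the four-layer paths $v_3v_4v_5v_6$ and $v_4v_5v_6v_7$ and using $D\cap F\ne\emptyset$ forces $C\cap E=\emptyset$ and $E\cap G=\emptyset$, i.e. $CDE\ttt$ and $EFG\ttt$. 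Write $L$ for the set of colors missed by $A\cup B\cup C$, so $|L|=\lfloor k/2\rfloor-1$ and $D,E\subseteq A\cup B\cup L$. I then split according to how $D$ meets $A$ and $B$, exactly the trichotomy of \lref{def}.

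If $D\cap A=\emptyset$, then $BACD\qq$ and $ACDE\qq$ (all triples being pairwise disjoint by the case hypothesis together with the preliminary deductions), so by \pref{useful} $D$ is $B$-rich and $E$ is $A$-rich; since $E\cap A$ and $E\cap B$ are disjoint subsets of $E$, an $A$-rich $E$ cannot also be $B$-rich, so we are committed to the pattern $B,A,C,B$ and must produce ``$F$ is $C$-rich'' and ``$G$ is $B$-rich''. I subdivide on $B\cap E$. If $B\cap E=\emptyset$ then $E\subseteq A\cup L$ is very $A$-rich, $CBEF\qq$ and $BEFG\qq$ are legal when $B\cap F=\emptyset$ and finish the case, while if $B\cap F\ne\emptyset$ I first kill $C\cap G$ with the length-$8$ repetition carried by the path $v_1v_2\cdots v_7v_6'$ (traverse $P_7$, then step back one layer), whose four entries are colors in $A\cap E,\ B\cap F,\ C\cap G,\ D\cap F$; once $C\cap G=\emptyset$, the layer $G$ is disjoint from $C,E,F$, and since $E,F$ together cover all of $A\cup B\cup L$ except $\lfloor k/2\rfloor-1+|F\cap C|$ colors, there is ``no room'' for a $k$-set $G$ unless $F$ is $C$-rich, and the same counting (using $G$ $B$-rich) closes it. The complementary subcase $B\cap E\ne\emptyset$ is handled by a length-$6$ repetition on $v_2\cdots v_7$ and further counting of the same flavour.

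If $D\cap B=\emptyset$, then $D\subseteq A\cup L$ is $A$-rich and $BCDE\qq$ gives $E$ is $B$-rich; these two facts make the length-$6$ repetition on $v_1\cdots v_6$ (entries in $A\cap D,\ B\cap E,\ C\cap F$) available, forcing $C\cap F=\emptyset$. Then $FCED\qq$ and $CEFG\qq$ give that $D$ is $F$-rich and $G$ is $C$-rich, so the target is $F,B,A,C$ and only ``$F$ is $A$-rich'' remains; one of $ACEF\qq$, $ACBF\qq$ delivers this unless both $A\cap E\ne\emptyset$ and $B\cap F\ne\emptyset$, in which case the length-$8$ path $v_1v_2\cdots v_7v_6'$ carries the repetition $a'bc^*x\,a'bc^*x$ with $a'\in A\cap E$, $b\in B\cap F$, $c^*\in C\cap G$, $x\in D\cap F$ (note $x\notin B$, so $x\ne b$ and the two vertices used in layer $v_6$ are distinct), a contradiction. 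Finally, if $D$ meets both $A$ and $B$ I argue as in Case 3 of \lref{def}: the only length-$2$/length-$4$ repetitions not already excluded pin down $E$, and after invoking the already-established $CDE\ttt$, $EFG\ttt$ one reduces to the two cases above for the purpose of placing $F$ and $G$.

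The hard part is everything about $F$ and $G$. Without $DEF\ttt$ one cannot read $F$ off a single $\qq$-quadruple, so the disjointnesses $C\cap F=\emptyset$, $C\cap G=\emptyset$, $B\cap F=\emptyset$, $A\cap E=\emptyset$ that one needs have to be manufactured by exhibiting explicit repetitions — notably the length-$8$ repetitions that run all the way along $P_7$ and then backtrack by a single layer — and then converted into richness via the ``no room for the next layer'' counting. Once the bookkeeping of which pairs must intersect and which must be disjoint is organized, the several logically possible endings collapse onto exactly the two asserted patterns $F,B,A,C$ and $B,A,C,B$.
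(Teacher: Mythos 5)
Your overall architecture (split on $D\cap A$ and $D\cap B$, establish $CDE\ttt$ and $EFG\ttt$ from \lref{path4}, then chase richness through $\qq$-quadruples and length-$6$/length-$8$ repetitive paths) is the same as the paper's. But there are three genuine gaps, and the most serious one is in exactly the step you identify as ``the hard part.''

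First, your length-$8$ path $v_1v_2\cdots v_7 v_6'$ revisits layer $6$, so you must be able to choose \emph{distinct} colors in $B\cap F$ and $D\cap F$. In the branch $D\cap B=\emptyset$ you correctly observe $x\notin B$, so $x\ne b$ and the path is legal. But in the branch $D\cap A=\emptyset$ you have just proved $D$ is $B$-rich, so $D\cap B\ne\emptyset$, and nothing rules out $B\cap F=D\cap F=\{x\}$ for a single $x\in B\cap D\cap F$; in that configuration your path degenerates (positions $6$ and $8$ would be the same vertex) and the argument for $C\cap G=\emptyset$ collapses. The paper avoids this by using a different path of shape $bab'cbab'c$ with $b\in B\cap D$ and $b'\in B\cap F$ occupying two \emph{layer-$2$} positions, where the collision is ruled out because $|B\cap D|\ge\lceil k/2\rceil+1\ge 2$; moreover the paper applies this \emph{before} splitting on $E\cap B$, proving $F\cap B=\emptyset$ unconditionally (your sub-subcase $B\cap F\ne\emptyset$ is in fact vacuous). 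Your subsequent ``no room'' count is sound once $C\cap G=\emptyset$ is in hand, but the way you get there is broken.

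Second, the subcase $D\cap A=\emptyset$, $B\cap E\ne\emptyset$ is dismissed in one clause (``a length-$6$ repetition on $v_2\cdots v_7$ and further counting''), but that length-$6$ path needs $C\cap F\ne\emptyset$ and $D\cap G\ne\emptyset$, neither of which you have; the paper instead uses the same backtracking $bab'cbab'c$ trick with $b'\in B\cap E$ to force $F\cap C=\emptyset$, then derives a contradiction from two $A$-rich sets inside $A$. Third, the case $D\cap A\ne\emptyset$, $D\cap B\ne\emptyset$ does not ``reduce to the two cases above'' -- both of those cases start from a disjointness hypothesis on $D$ that is now false, so none of their opening $\qq$-deductions are available. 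The paper devotes three further subcases (on how $E$ meets $A$ and $B$) to this situation, each requiring its own repetitive-path and counting argument, and this is where roughly half the work of the lemma lives. As written, your proposal leaves the lemma unproved in that case.
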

\begin{proof}
By \lref{path4}, we know that  $EFG \ttt$ and $CDE \ttt$.
We consider three cases

\vskip 0.2truecm

\textsc{Case 1}: $D\cap B=\emptyset$.
\vskip 0.2truecm

As $BCD \ttt$, we can apply Lemma \ref{lem:def} to the color set sequence $BCDEFG$. Thus $E,F,G$ are either
 $C,B,D$-rich respectively, or $B,D,C$-rich respectively.
Also $ABCD \qq$ implies that $D$ is $A$-rich, and $BCDE \qq$ implies
that $E$ is $B$-rich. Therefore $E$ cannot be $C$-rich, as $B \cap C
= \emptyset$. So $D,E,F,G$ are   $A,B,D,C$-rich respectively. This
implies that $F \cap B =\emptyset$, for otherwise there is a
repetitive path with colors $abb'cabb'c$, $a\in A,D;b\in B,E;b'\in
B,F; c\in C,G$. Also $F \cap C =\emptyset$, for otherwise there is a
repetitive path with colors $abcabc$, $a\in A,D;b\in B,E;c\in C,F$.
Now $ABCF \qq$ implies that $F$ is $A$-rich. Thus we have proved
that $D,E,F,G$ are $A,B,A,C$-rich, respectively, and $F$ is
$D$-rich.

\vskip 0.2truecm

\textsc{Case 2}: $D\cap A=\emptyset$.
\vskip 0.2truecm

Then $BACD \qq$ implies that $D$ is $B$-rich.
As $CDE \ttt$, $E \cap C = \emptyset$.
Thus $ACDE \qq$ and hence  $E$ is $A$-rich.

If $F \cap B \ne \emptyset$, then $(F \cup G) \cap C = \emptyset$, for otherwise
there is a repetitive path with colors $bab'cbab'c$, $b\in B, D,; a\in A,E;b'\in B,F; c\in C,F\cup G$.
Then $(E \cup F \cup G) \cap C = \emptyset$, which is a contradiction as $EFG \ttt$.
So $F \cap B = \emptyset$.

\vskip 0.2truecm

\textsc{Case 2(i)}: $E \cap B \ne \emptyset$. \vskip 0.2truecm Then
$F \cap C = \emptyset$, for otherwise  there is a repetitive path
with colors $bab'cbab'c$, $b\in B,D;a\in A,E;b'\in B,E; c\in C,F$.
Now $ABCF \qq$ implies that $F$ is $A$-rich, which is a
contradiction as $E$ is $A$-rich and $E\cap F = \emptyset$.

\vskip 0.2truecm

\textsc{Case 2(ii)}: $E \cap B = \emptyset$.
\vskip 0.2truecm

Now $BEFG \qq$ implies that $G$ is $B$-rich, and $CBEF \qq$ implies that $F$ is $C$-rich.
So we have proved that $D,E,F,G$ are $B,A,C,B$-rich, respectively.

\vskip 0.2truecm
\textsc{Case 3}: $D\cap A\ne \emptyset$ and $D \cap B \ne \emptyset$.
\vskip 0.2truecm

\textsc{Case 3(i)}: $E \cap A = \emptyset$. \vskip 0.2truecm Now
$BACE \qq$ implies that $E$ is $B$-rich, and $ACED\qq$ implies  that
$D$ is $A$-rich. This implies that $F$ is disjoint from $C$,  for
otherwise there is a repetition $abcabc$, $a \in A,D; b\in B,E;c\in
C,F$. Then $ACEF \qq$ implies that $F$ is $A$-rich, and $DECF \qq$
implies that $F$ is $D$-rich, and $CEFG \qq$ implies that $G$ is
$C$-rich.Thus $D,E,F,G$ are $A,B,A,C$-rich respectively, and we are
done.

\textsc{Case 3(ii)}: $E\cap B=\emptyset$.

Then $ABCE \qq$ implies that $E$ is $A$-rich, and $BCED\qq$ implies
that $D$ is $B$-rich. If $F \cap B = \emptyset$, then $CBEF \qq$
implies that $F$ is $C$-rich and $BEFG\qq$ implies that $G$ is
$B$-rich. So $D,E,F,G$ are $B,A,C,B$-rich, respectively. Thus we
assume $F \cap B \ne \emptyset$. Then $(F \cup G) \cap C =
\emptyset$, for otherwise there is a repetitive path with colors
$bab'cbab'c$, $b\in B,D;a\in A,E;b'\in B,F; c\in C,F \cup G$. Now
$(D\cup E \cup F \cup G) \cap C = \emptyset$, which is a
contradiction.

\textsc{Case 3(iii)}: $E\cap A\ne\emptyset$ and $E\cap B\ne \emptyset$.

In this case, $F \cap C = \emptyset$, for otherwise there is a
repetitive path with colors $abcabc$, $a\in A,D;b\in B,E;c\in C,F$.
If $F\cap B\ne \emptyset$ then $G\cap C =\emptyset$, for otherwise
there is a repetitive path with colors $abb'cabb'c$, $a\in A,D;b\in
B,E;b'\in B,F; c\in C,G$. Then $(D \cup E \cup F \cup G) \cap
C=\emptyset$, which is a contradiction. Thus $F\cap B= \emptyset$.
Now $ABCF\qq$ implies  that $F$ is $A$-rich, and
 $BCFE \qq$ implies that $E$ is $B$-rich, and $CEFG \qq$ implies  that $G$ is $C$-rich, and $DCEF \qq$ implies that $F$ is $D$-rich.
 So $D,E,F,G$ are $F,B,A,C$-rich, respectively.
\end{proof}

\begin{lemma}
\label{lem:defg2}
Assume $P_9[K_k]$ is nonrepetitively colored with $3k+\lfloor k/2\rfloor -1$ colors, and the color
sets of the layers are $XYABCDEFG$, and $ABC \ttt$ and $D \cap F \ne \emptyset$.
If $D, E, F, G$ are $F,B,A,C$-rich, respectively, then $D$ is $A$-rich.
If $D, E, F, G$ are   $B,A,C,B$-rich, respectively, then $G$ is $D$-rich.
\end{lemma}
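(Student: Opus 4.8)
The plan is to prove both statements by contradiction, reducing each time to an application of \lref{defg1} (or \lref{def}) to a suitable window of the nine layers or of its reverse. Two facts will be used repeatedly: consecutive layers have disjoint colour sets (a common colour gives a length‑$2$ repetition), and, since $D\cap F\neq\emptyset$, \lref{path4} applied to the sub‑paths $CDEF$ and $DEFG$ forces $CDE\ttt$ and $EFG\ttt$. Write $R$ for the set of $\lfloor k/2\rfloor-1$ colours outside $A\cup B\cup C$, so that every layer colour set decomposes along $A\sqcup B\sqcup C\sqcup R$. Throughout, ``$X$ is $Y$‑rich'' means $|X\cap Y|\ge\lceil k/2\rceil+1$, and we will use \pref{useful} freely.

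For the first statement, assume $D$ is not $A$‑rich, i.e.\ $|D\cap A|\le\lceil k/2\rceil$. Since $D\cap C=\emptyset$ we have $D\subseteq A\cup B\cup R$, and with $|D\cap R|\le\lfloor k/2\rfloor-1$ this gives $|D\cap B|\ge k-\lceil k/2\rceil-(\lfloor k/2\rfloor-1)=1$, so $D\cap B\neq\emptyset$. On the other hand $E$ is $B$‑rich and $D\cap E=\emptyset$, hence $|D\cap B|\le k-|E\cap B|\le\lfloor k/2\rfloor-1$; in particular ``$B$ is $D$‑rich'' is false. Now \lref{path4} on the sub‑path $XYAB$ gives $X\cap A=\emptyset$ or $Y\cap B=\emptyset$. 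If $Y\cap B=\emptyset$ then $\{Y,A,B\}\ttt$, and applying \lref{def} to the reversed window $EDCBAY$ — whose first three colour sets $E,D,C$ are pairwise disjoint and whose last three $B,A,Y$ are pairwise disjoint — we would get that $(B,A,Y)$ is $(D,E,C)$‑rich or $(E,C,D)$‑rich; the first contains ``$B$ is $D$‑rich'' and the second contains ``$A$ is $C$‑rich'', both impossible ($A\cap C=\emptyset$). Hence $X\cap A=\emptyset$, i.e.\ $\{X,Y,A\}\ttt$. Finally apply \lref{defg1} to the window $XYABCDE$ (playing the roles of $ABCDEFG$): its hypotheses $\{X,Y,A\}\ttt$ and $B\cap D\neq\emptyset$ hold, and its conclusion is that $(B,C,D,E)$ is either $(D,Y,X,A)$‑rich or $(Y,X,A,Y)$‑rich. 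The first option asserts ``$B$ is $D$‑rich'', which is false, so the second holds; in particular $D$ is $A$‑rich, a contradiction.

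For the second statement, assume $G$ is not $D$‑rich. First note, exactly as inside \lref{defg1}, that $E\cap B=\emptyset$: otherwise, since $F$ is $C$‑rich and $|G\cap D|\ge 2$ by \pref{useful}, a triple $x\in B\cap E$, $y\in C\cap F$, $z\in D\cap G$ yields a repetition $xyzxyz$ along $B,C,D,E,F,G$. Consequently $E\subseteq A\cup R$ and $|D\cap A|\le k-|E\cap A|\le\lfloor k/2\rfloor-1$, so ``$D$ is $A$‑rich'' is false. Now split on the $XYAB$ dichotomy. If $X\cap A=\emptyset$, apply \lref{defg1} to $XYABCDE$ (using $B\cap D\neq\emptyset$, which holds as $D$ is $B$‑rich, and discarding the option ``$D$ is $A$‑rich''): this forces $D$ to be $X$‑rich and $C$ to be $Y$‑rich. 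If instead $Y\cap B=\emptyset$, then $\{Y,A,B\}\ttt$ and $YABC\qq$, so \pref{useful} again gives that $C$ is $Y$‑rich. In either case one feeds these newly produced rich pairs, together with the hypothesised pattern $D,E,F,G$ being $B,A,C,B$‑rich and the relation $E\cap B=\emptyset$, into further applications of \lref{defg1}/\lref{def} on the windows $GFEDCBA$, $XYABCDE$ and their reverses; every surviving branch either contains an impossible rich relation (typically an ``$X$ is $C$‑rich'' with $X\cap C=\emptyset$, or two disjoint sets each meeting a common layer in more than half its colours) or yields an explicit repetition of length $\le 8$ along the nine layers, whence the desired conclusion $G$ is $D$‑rich.

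I expect the second statement to be the real obstacle. The naive counting for ``$G$ is $D$‑rich'' gives only $|G\cap D|\ge 2\lceil k/2\rceil+2-k$ (both $D$ and $G$ are $B$‑rich, so they meet in at least that many colours of $B$), which is $\ge 2$ but far below $\lceil k/2\rceil+1$ for large $k$; likewise in the first statement $D$ being $F$‑rich and $F$ being $A$‑rich only gives $|D\cap A|\ge 2$. Thus the ``rich'' upgrade must come entirely from the two extra layers $X,Y$ and from nonrepetitiveness, and the delicate point is to keep the induced case tree finite and to produce, at each leaf, either the forbidden rich relation or the short repetitive path; the first statement shows the mechanism in its cleanest form, and the second is the same mechanism pushed through a larger case analysis.
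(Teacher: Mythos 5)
Your argument for the first statement is complete and is essentially the paper's: a dichotomy on the $XYAB$ window (equivalently on whether $B\cap Y=\emptyset$), an application of Lemma~\ref{lem:def} or Lemma~\ref{lem:defg1} to a window touching $X,Y$, and the unwanted branch ruled out by observing that ``$B$ is $D$-rich'' is impossible because $E$ is $B$-rich and $D\cap E=\emptyset$. The paper applies the lemmas to the reversed windows $EDCBAY$ / $EDCBAYX$; you apply Lemma~\ref{lem:defg1} to the forward window $XYABCDE$ in one branch, which is a harmless variant. Your preliminary counting step establishing $D\cap B\neq\emptyset$ is correct and is exactly what is needed to invoke Lemma~\ref{lem:defg1}.

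The second statement, however, you do not prove. Your opening moves are sound --- you establish $E\cap B=\emptyset$, rule out ``$D$ is $A$-rich,'' and extract ``$C$ is $Y$-rich'' or ``$D$ is $X$-rich'' from the $XYAB$ dichotomy --- but then the argument becomes ``one feeds these newly produced rich pairs into further applications of Lemma~\ref{lem:defg1}/Lemma~\ref{lem:def} on the windows $GFEDCBA$, $XYABCDE$ and their reverses; every surviving branch either contains an impossible rich relation or yields an explicit repetition.'' That is a promissory note, not a derivation, and your closing paragraph concedes the point. So there is a genuine gap: the second half is not proved. For what it is worth, the paper's own two-line treatment of this half is also questionable as written: it claims Lemma~\ref{lem:defg1} applied to $GFEDCBA$ gives ``$D$ is either $G$-rich or $F$-rich,'' but unwinding the role map (lemma-$A,\dots,G$ $=$ $G,F,E,D,C,B,A$) one only obtains ``$D$ is $B$-rich or $D$ is $F$-rich''; and Lemma~\ref{lem:def} on $GFEDCB$ is never applicable here, since the hypothesis ``$D$ is $B$-rich'' already forces $D\cap B\neq\emptyset$. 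So the obstacle you ran into is real, and closing it requires more than the short reduction sketched in the paper.
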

\begin{proof}
Observe that we started the labeling process without using $X_1,X_2$
for the purpose that we can always find the color sets $X,Y$ used in
this lemma. Assume  $D, E, F, G$ are $F,B,A,C$-rich, respectively.
Apply Lemma \ref{lem:def} to the color set sequence $EDCBAY$ (if
$BAY \ttt$)  or  Lemma \ref{lem:defg1} to the color set sequence
$EDCBAYX$ (if $B \cap Y \ne \emptyset$), the only case not leading
to contradiction gives that  $A$ is  $D$-rich.

Assume $D, E, F, G$ are   $B,A,C,B$-rich, respectively. Apply  Lemma
\ref{lem:def} to the color set sequence $GFEDCB$  (if $DCB \ttt$)
or Lemma \ref{lem:defg1} to the color set sequence $GFEDCBA$ (if $D
\cap B \ne \emptyset$), we conclude that $D$ is either $G$-rich or
$F$-rich. If $D$ is $F$-rich then as $F$ is $C$-rich, $D$ would
intersect $C$, a contradiction.  Thus $D$ is $G$-rich which
completes the proof of the lemma.
\end{proof}

\vskip 0.2truecm
\subsection{Upper bounds}
\label{sec:upp}

 Before we start our proofs, we describe some tools from the  paper of K\"undgen and Pelsmajer \cite{KP}.

\begin{lemma} [K\"undgen, Pelsmajer, Lemma 3 in \cite{KP}]
\label{lem:walk} If $c$ is a nonrepetitive palindrome-free coloring of a path $P$, and $P'$
is obtained from $P$ by adding a loop at
each vertex, then every repetitively colored walk $W_1W_2$ in $P'$
satisfies $W_1 = W_2$.
\end{lemma}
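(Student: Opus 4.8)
The plan is to induct on the common length $l$ of the two halves. Write $W_1W_2 = w_1w_2\cdots w_{2l}$, so that ``repetitively coloured'' means $c(w_i)=c(w_{l+i})$ for $1\le i\le l$. Throughout I use only that, being nonrepetitive, $c$ has no repetition of length $2$; that is, $c$ is a proper colouring of $P$. The base case $l=1$ is immediate: $w_1w_2$ is an edge of $P'$ with $c(w_1)=c(w_2)$, so it is not an edge of $P$ and hence a loop, giving $w_1=w_2$.

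For the inductive step I first peel off loops and backtracks. If the walk uses a loop, $w_j=w_{j+1}$, then comparing with the shifted position one sees that a vertex is repeated at the symmetric place too, and deleting the two repeated vertices yields a strictly shorter walk in $P'$ that is still repetitively coloured (its new seams are loop steps); by induction its halves agree, and then so do the two deleted coordinates --- for the boundary deletion ($j=l$), recovering $w_1=w_{l+1}$ uses that the (at most two) path-neighbours of $w_2$ have distinct colours, as otherwise $c$ contains a length-$3$ palindrome. So I may assume every step goes to a genuine neighbour; identifying $V(P)$ with consecutive integers, every step is $\pm 1$. Now for an interior vertex $w_j$ of $W_1$ ($2\le j\le l-1$), the walk backtracks there (i.e.\ $w_{j-1}=w_{j+1}$) if and only if $c(w_{j-1})=c(w_{j+1})$ --- the nontrivial direction because otherwise $w_{j-1},w_{j+1}$ are two distinct, equally coloured neighbours of $w_j$, a forbidden palindrome. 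Since $c(w_{j\pm 1})=c(w_{l+j\pm 1})$, a backtrack at $w_j$ forces one at $w_{l+j}$, and deleting $w_j,w_{l+j}$ again gives a shorter repetitively coloured walk, handled by induction and the same palindrome remark. So I may also assume there is no interior backtrack in either half.

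Then each of $w_1\cdots w_l$ and $w_{l+1}\cdots w_{2l}$ runs in one fixed direction, so $I_1=\{w_1,\dots,w_l\}$ and $I_2=\{w_{l+1},\dots,w_{2l}\}$ are blocks of $l$ consecutive vertices, and $c(w_i)=c(w_{l+i})$ says that, read in the correct direction, $c$ on $I_2$ is a (possibly reversed) copy of $c$ on $I_1$. The single step from $w_l$ to $w_{l+1}$ satisfies $|w_l-w_{l+1}|=1$, which pins $I_2$ down relative to $I_1$: either the two blocks and their directions coincide, which reads off $W_1=W_2$, or the forced overlap makes $c$ contain, on a block of consecutive vertices of $P$, a repetition (when the two halves run the same way) or a palindrome of length $l+1$ (when they run oppositely); the latter is forbidden when $l+1$ is odd, and when $l+1$ is even it forces two equal adjacent colours, contradicting properness. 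Every nontrivial configuration is impossible, so $W_1=W_2$, completing the induction.

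I expect the last step to be the real work: from $|w_l-w_{l+1}|=1$ and the common colouring, enumerating the few possible positions of $I_2$ relative to $I_1$ and checking that each one other than $I_1=I_2$ produces a bona fide repetition or palindrome of $c$ --- treating even-length palindromes via properness, and confirming the offending block genuinely consists of consecutive vertices of $P$ (it does, since the walk visits exactly those vertices). The two reduction steps are routine but need care that the shortened sequence is still a walk in $P'$ and that symmetric deletion preserves being a repetition.
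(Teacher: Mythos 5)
The paper does not actually prove this lemma: it is imported verbatim as Lemma~3 of K\"undgen and Pelsmajer \cite{KP} and used as a black box, so there is no in-paper argument to weigh your proposal against. Judged on its own, your proof is correct and complete in structure. The reduction steps are legitimate: if $w_j=w_{j+1}$ then properness forces $w_{l+j}=w_{l+j+1}$; if $w_{j-1}=w_{j+1}$ at an interior $j$ then, loops having been eliminated, the absence of length-$3$ palindromes forces $w_{l+j-1}=w_{l+j+1}$; in each case deleting the symmetric pair yields a strictly shorter repetitively coloured walk in $P'$, induction matches the surviving coordinates, and the deleted pair is recovered because two distinct equally-coloured vertices in $N_{P'}[w_2]$ (respectively $N_{P'}[w_{j-1}]$) would give a length-$2$ or length-$3$ repetition/palindrome. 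Once both halves are monotone, $|w_l-w_{l+1}|=1$ leaves exactly the offsets $w_{l+1}=w_l\pm1$ in each of two orientations; the same-orientation cases produce on the vertices visited by the walk an honest repetition of length $2l$ or $2(l-2)$, the opposite-orientation cases produce a palindrome of length $l+1$ (odd length is excluded by palindrome-freeness, even length forces two equal adjacent colours), and the only escape is $l=2$ with $W_2$ retracing $W_1$, i.e.\ $W_1=W_2$. The one sentence to tidy is the opening claim that ``throughout'' you use only properness --- the later steps plainly use palindrome-freeness and nonrepetitiveness --- and the parenthetical ``its new seams are loop steps'' is not quite accurate (the spliced step is a loop composed with an edge, hence an edge); but neither affects the argument.
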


Let $V_1, \ldots ,V_m$ be a partition of $V(G)$ and let $G_k$ and
$G_{>k}$ denote the subgraphs of $G$ induced by $V_k$ and $V_{k+1}
\cup \ldots \cup V_m$, respectively. The \textit{$k$-shadow} of a
subgraph $H$ of $G$ is the set of vertices in $V_k$ which have a
neighbor in $V(H)$. We say that $G$ is \textit{shadow complete}
(with respect to the partition) if the $k$-shadow of every component
of $G_{>k}$ induces a complete graph.

\begin{theorem} [K\"undgen, Pelsmajer, Theorem 6 in \cite{KP}]
\label{thm:4times} If $G$ is shadow complete and each $G_k$ has a
nonrepetitive coloring with $b$ colors, then $G$ has a nonrepetitive
coloring with $4b$ colors.
\end{theorem}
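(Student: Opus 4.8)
The plan is to construct the coloring as a product of a ``level'' coloring with ``internal'' colorings of the classes, in the style of K\"undgen and Pelsmajer. First I fix a coloring $\phi$ of the path $P_m$ on the index set $\{1,\dots,m\}$ that is simultaneously nonrepetitive and palindrome-free; such a coloring exists with $4$ colors, which is what produces the factor $4$. For each $k$ I fix a nonrepetitive coloring $c_k$ of $G_k$ using a fixed set of $b$ colors (the same set for every $k$), and I color a vertex $v$ of the class $V_k$ by the pair $\bigl(\phi(k),c_k(v)\bigr)$. Since this uses at most $4b$ colors, the whole problem reduces to checking that the coloring is nonrepetitive.

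Suppose for contradiction that $Q=v_1v_2\ldots v_{2l}$ is a path in $G$ whose color string is a repetition, and write $\ell(v)$ for the index of the class containing $v$. The first step is to prove $\ell(v_i)=\ell(v_{l+i})$ for $1\le i\le l$, i.e.\ that the two halves of $Q$ pass through the same classes. The sequence $\ell(v_1),\ldots,\ell(v_{2l})$ is a walk in $P_m$ with a loop added at each vertex --- consecutive entries differ by at most one, as $Q$ is a path in $G$ and the classes form a layering --- and by the repetition of $Q$ its $\phi$-colors satisfy $\phi(\ell(v_i))=\phi(\ell(v_{l+i}))$. Hence this is a repetitively colored walk in the looped path, and \lref{walk} applied to $\phi$ on $P_m$ forces its two halves to coincide, which is exactly $\ell(v_i)=\ell(v_{l+i})$.

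The second step produces a contradiction at the lowest class $k_0=\min_j\ell(v_j)$ visited by $Q$, and it is here that shadow-completeness enters. List the vertices of $Q$ in $V_{k_0}$ in the order they occur along $Q$. A maximal sub-path of $Q$ lying strictly between two consecutive such vertices uses only classes with index larger than $k_0$ (by minimality of $k_0$), hence lies in a single component of $G_{>k_0}$; the two $V_{k_0}$-vertices flanking it therefore belong to the $k_0$-shadow of that component, and since the shadow is a clique they are adjacent in $G_{k_0}$. Thus the $V_{k_0}$-vertices of $Q$, taken in order, form an honest path of $G_{k_0}$. By the first step the positions at which $Q$ visits $V_{k_0}$ split evenly between $\{1,\dots,l\}$ and $\{l+1,\dots,2l\}$ and are paired by $i\mapsto i+l$, so reading $c_{k_0}$ along this path gives a repetition of positive length --- contradicting that $c_{k_0}$ is a nonrepetitive coloring of $G_{k_0}$.

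I expect the main obstacle to be the routine but fiddly bookkeeping: in the first step one must verify carefully that the index sequence of a path in $G$ is genuinely a loop-augmented walk to which \lref{walk} applies, and in the second step that the extracted sub-path on $V_{k_0}$ is really a path (its vertices pairwise distinct) and that, once the two halves of $Q$ have been matched up, its internal colors really do form a repetition. The only feature of $\phi$ used beyond nonrepetitiveness is that it is palindrome-free, which is precisely the hypothesis of \lref{walk}; and the only use of shadow-completeness is to convert the repeated departures and returns of $Q$ at the class $V_{k_0}$ into a single path inside $G_{k_0}$.
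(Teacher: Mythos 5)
Your argument is essentially the K\"undgen--Pelsmajer proof, and since this paper only cites the result without reproducing it, that is the right target for comparison: color $v\in V_k$ by the pair $(\phi(k),c_k(v))$ with $\phi$ a palindrome-free nonrepetitive $4$-coloring of a path, invoke \lref{walk} to force the two halves of a hypothetical repetitive path in $G$ to visit the same level at every step, and then use shadow-completeness to collapse the visits to the lowest level $k_0$ into a path of $G_{k_0}$ on which $c_{k_0}$ is repetitively colored. The bookkeeping you flag at the end does work out as sketched: by step one the indices $i\le l$ with $\ell(v_i)=k_0$ are in bijection with those in $\{l+1,\dots,2l\}$ via $i\mapsto i+l$, at least one lies in each half because $k_0$ is attained and step one transports such indices across the halves, and the matching full colors force the matching $c_{k_0}$-colors once the levels agree.

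The one point you should not leave implicit is the assertion, tucked into an aside, that the level sequence $\ell(v_1),\dots,\ell(v_{2l})$ is a walk in the looped $P_m$ ``as the classes form a layering.'' That is not a consequence of the theorem as quoted in this paper, which speaks only of a partition, and it is genuinely needed: partitioning $K_n$ into $n$ singletons is a shadow-complete partition with $b=1$, yet $\pi(K_n)=n$, so the statement is false for arbitrary shadow-complete partitions. K\"undgen and Pelsmajer actually state their Theorem~6 for a shadow-complete \emph{layering}, meaning every edge of $G$ joins two vertices in the same part or in consecutive parts $V_k,V_{k+1}$; that is exactly what makes the level sequence of a path a walk in $P_m'$ and lets \lref{walk} apply. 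Under that (intended) hypothesis your proof is correct and matches theirs; without it, the passage from a path in $G$ to a walk in $P_m'$ fails and the theorem itself is false.
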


\begin{proof}[Proof of \tref{2k1}] Recall that we want to prove that
for any $n \ge 4$ and $k\ne 2$, we have $\pi(P_n[E_k])=2k+1$ and for $k=2$ we
have $5 \le \pi(P_n[E_2]) \le 6$. The lower bounds of the theorem follow from \lref{degree} and \lref{path4}.

To prove the upper bounds we need to define a nonrepetitive coloring $c$ of
 $P_{\infty}[E_k]$. For $k\ge 3$ let $Y$ denote the set $\{k+1,k+2,\ldots ,2k+1\}$
 and $X$ denote the set $\{1,2,\ldots ,k\}$. If $k=2$, then let $X=\{1,2\},Y=\{3,4,5,6\}$.
 Elements of $Y$ will be denoted by lower case letters $a,b,c,a_1,$ etc. Let $S=s_1s_2s_3s_4\ldots .$
 be an infinite palindrome-free nonrepetitive sequence. Such a sequence exists using only
 4 symbols \cite{AGHR}. Thus we can pick all $s_i$'s from $Y$. Let the vertex set of $P_{\infty}$
 be $\{v_1,v_2,\ldots \}$ and $E(P_n)=\{(v_i,v_{i+1}):1\le i \}$. If $j=4(i-1)+1$, then define $c$
 on $v_j[E_k]$ such that $c[v_j[E_k]]=X$. If $j=4(i-1)+2$ or $j=4i$, then for any vertex $u \in v_j[E_k]$ let
$c(u)=s_i$. Finally, if $j=4(i-1)+3$, then define $c$ on $v_j[E_k]$ such that
 $c[v_j[E_k]]$ is a $k$-subset of $Y\setminus s_i$ (note that if $k \ge 3$,
 then $|Y\setminus s_i|= k$ and if $k=2$, then  $|Y\setminus s_i|= 3$).

We claim that $c$ is nonrepetitive. Assume to the contrary   that
there is a path $Q_1Q_2$ in $P_{\infty}[E_k]$ such that the sequence
of colors on $Q_1Q_2$ is a repetition. Remove all vertices from
$Q_1Q_2$ that have colors from the set $X$. The sequence of colors
of the remaining vertices $Q'_1Q'_2=(q'_{1,1}\ldots
q'_{1,l}q'_{2,1}\ldots q'_{2,l})$ still form a repetition. Let
$P'_{\infty}$ be an infinite path with one loop added to each of its
vertices. Furthermore, let $c_S$ be the coloring of $P'_\infty$ with
$c_S(p'_j)=s_j$. Let us define the function $f:Q'_1Q'_2 \rightarrow
P'_\infty$ with $f(q)=p'_i$ if and only if $q \in v_{4(i-1)+2}[E_k]
\cup v_{4(i-1)+3}[E_k] \cup v_{4i}[E_k]$. Writing $W_1$ and $W_2$
 for the images of $Q'_1$ and $Q'_2$, we obtain that $W_1W_2$ is a walk in $P'_{\infty}$.
\begin{claim}
\label{clm:good1} The sequence of colors of vertices in $W_1W_2$ with respect to the
coloring $c_S$ is a repetition.
\end{claim}
\begin{proof}
Let $1 \le m \le l$. Consider the largest parts of $Q_1$ and $Q_2$
that contain $q'_{1,m}$ and $q'_{2,m}$ such that they form a subpath
of $Q'_1$ and $Q'_2$, i.e. the subpaths of $Q_1$ and $Q_2$ that lie
between consecutive $X$-colored vertices of $Q_1$ and $Q_2$.
Clearly, the part in $Q_1$ lies entirely within $v_{4(i-1)+2}[E_k]
\cup v_{4(i-1)+3}[E_k] \cup v_{4i}[E_k]$ for some $i$ and the part
in $Q_2$ lies entirely within $v_{4(j-1)+2}[E_k] \cup
v_{4(j-1)+3}[E_k] \cup v_{4j}[E_k]$ for some $j$ and vertices of the
former are mapped by $f$ to $p'_i$ and those of the latter are
mapped by $f$ to $p'_j$. If these paths are $(q'_{1,m_1}\ldots
q'_{1,m}\ldots q'_{1,m_2})$ and $(q'_{2,m_1}\ldots q'_{2,m}\ldots
q'_{2,m_2})$, then $c(q'_{1,m_1})=c(q'_{2,m_1})$ and
$c(q'_{1,m_2})=c(q'_{2,m_2})$ and at least one of the pairs
$(q'_{1,m_1},q'_{2,m_1}), (q'_{1,m_2},q'_{2,m_2})$, say the former
one, lie next to an $X$-colored vertex and therefore their $c$-color
is $s_i$ and $s_j$. This shows that
$c_S(f(q'_{1,m}))=s_i=c(q'_{1,m_1})=c(q'_{2,m_1})=s_j=c_S(f(q'_{2,m}))$.
\end{proof}
By \clref{good1} and \lref{walk}, $W_1=W_2$. Suppose first that
$W_1=W_2$   contains at least two different vertices. This means
that the original paths $Q_1$ and $Q_2$ had to cross from
$v_{4(i-1)+2}[E_k] \cup v_{4(i-1)+3}[E_k] \cup v_{4i}[E_k]$ to
$v_{4(i)+2}[E_k] \cup v_{4(i)+3}[E_k] \cup v_{4(i+1)}[E_k]$ or vice
versa.
 But as the layer $v_{4i+1}[E_k]$ is rainbow colored with colors in $X$,
 the original color sequence of $Q_1Q_2$ could not be a repetition.

Suppose then that $W_1W_2$ is a walk repeating the same vertex $p'_i$.
Then all vertices of $Q_1Q_2$ must lie in $v_{4(i-1)+1}[E_k]\cup v_{4(i-1)+2}[E_k] \cup v_{4(i-1)+3}[E_k] \cup v_{4i}[E_k] \cup v_{4i+1}[E_k]$.
Therefore $Q_1Q_2$ cannot contain any vertex from $v_{4(i-1)+3}[E_k]$ as they have unique colors among
vertices in these 5 layers, preventing the possibility of a repetition. By connectivity, we get that
$Q_1Q_2$ must lie either in $v_{4(i-1)+1}[E_k]\cup v_{4(i-1)+2}[E_k]$ or in $v_{4i}[E_k] \cup v_{4i+1}[E_k]$,
say the former. By connectivity, $Q_1Q_2$ must contain a vertex from $v_{4(i-1)+1}[E_k]$ which has a unique
color among vertices in $v_{4(i-1)+1}[E_k]\cup v_{4(i-1)+2}[E_k]$. This contradicts the fact that the color
sequence of $Q_1Q_2$ is a repetition. This finishes the proof of \tref{2k1}.
\end{proof}

\begin{proof}[Proof of \tref{rainbow}]
We will construct a nonrepetitive rainbow coloring $c$ of
$P_{\infty}[E_k]$ with $\lceil 7k/2\rceil$ colors. Let us denote the
vertices of $P_{\infty}[E_k]$ by $p_i$ $i=1,2,3,\ldots $ with
$(p_i,p_j)$ forming an edge if and only if $|i-j|=1$. We will write
$V_i=p_i[E_k]$. Let $X,A,B,C,D,E$ be pairwise disjoint sets with
$|X|=k$, $|B|=|C|=|D|=\lceil k/2\rceil$, $|A|=|E|=\lfloor
k/2\rfloor$. Let $S=s_1s_2s_3\ldots $ be an infinite palindrome-free
nonrepetitive sequence with $s_i \in \{1,2,3,4\}$ for all positive
integers $i$. We define a coloring of $P_{\infty}[E_k]$ using colors
$X \cup A \cup B \cup C \cup D \cup E$ as follows:
\begin{itemize}
\item If $j=4(i-1)+1$ then $c[V_j]=X$.
\item If $s_i=1$, then $c[V_{4(i-1)+2}]=c[V_{4i}]=A \cup B$ and $c[V_{4(i-1)+3}]$ is a $k$-subset of $C \cup D$.
\item If $s_i=2$, then
$c[V_{4(i-1)+2}]=c[V_{4i}]=A \cup C$ and $c[V_{4(i-1)+3}]=B \cup E$.
\item If $s_i=3$, then  $c[V_{4(i-1)+2}]=c[V_{4i}]=C \cup E$ and $c[V_{4(i-1)+3}]=A \cup D$.
\item If $s_i=4$, then  $c[V_{4(i-1)+2}]=c[V_{4i}]=D \cup E$ and $c[V_{4(i-1)+3}]$ is a $k$-subset of $B \cup C$.
\end{itemize}

It is easy to verify that for any index $i$, any two colors $c_1 \in c[V_{4(i-1)+2}]=c[V_{4i}]$ and $c_2 \in c[V_{4(i-1)+3}]$ uniquely determine $s_i$.

We shall show that $c$ is a nonrepetitive coloring of
$P_{\infty}[E_k]$. Assume to the contrary that there is a path
$Q_1Q_2$ in $P_{\infty}[E_k]$ such that the sequence of colors on
$Q_1Q_2$ form a repetition. Remove all vertices from $Q_1Q_2$ that
have colors from the set $X$ and also those vertices which on the
path $Q_1Q_2$ have only neighbors that have colors from the set $X$.
The sequence of colors of the remaining vertices
$Q'_1Q'_2=(q'_{1,1}\ldots q'_{1,l}q'_{2,1}\ldots q'_{2,l})$ still
form a repetition. Let $P'_{\infty}$ be an infinite path with one
loop added to each of its vertices. Furthermore, let $c_S$ be the
coloring of $P'_\infty$ with $c_S(p'_j)=s_j$. Let us define the
function $f:Q'_1Q'_2 \rightarrow P'_\infty$ with $f(q)=p'_i$ if and
only if $q \in v_{4(i-1)+2}[E_k] \cup v_{4(i-1)+3}[E_k] \cup
v_{4i}[E_k]$. Writing $W_1$ and $W_2$ for the images of $Q'_1$ and
$Q'_2$, we obtain that $W_1W_2$ is a walk in $P'_{\infty}$. By the
observation above,  $c_1 \in c[V_{4(i-1)+2}]=c[V_{4i}]$ and $c_2 \in
c[V_{4(i-1)+3}]$ uniquely determine $s_i$.
 This ensures that the color sequence of $W_1W_2$ with respect to $c_S$ is a repetition.
Therefore by \lref{walk} we obtain that $W_1=W_2$.

The remainder of the proof is almost identical to that of \tref{2k1}. Suppose first that $W_1$
and thus $W_2$ contains at least two different vertices. This means that the original paths $Q_1$
and $Q_2$ had to cross from $v_{4(i-1)+2}[E_k] \cup v_{4(i-1)+3}[E_k] \cup v_{4i}[E_k]$ to $v_{4(i)+2}[E_k]
\cup v_{4(i)+3}[E_k] \cup v_{4(i+1)}[E_k]$ or vice versa. But as the layer $v_{4i+1}[E_k]$ is rainbow
colored with colors in $X$, the original color sequence of $Q_1Q_2$ could not be a repetition.

Suppose then that $W_1W_2$ is a walk repeating the same vertex $p'_i$. Then all vertices of $Q_1Q_2$
must lie in $v_{4(i-1)+1}[E_k]\cup v_{4(i-1)+2}[E_k] \cup v_{4(i-1)+3}[E_k] \cup v_{4i}[E_k] \cup v_{4i+1}[E_k]$.
Therefore $Q_1Q_2$ cannot contain any vertex from $\cup v_{4(i-1)+3}[E_k]$ as they have unique colors among vertices
in these 5 layers preventing the possibility of a repetition. By connectivity, we get that $Q_1Q_2$ must lie either
in $v_{4(i-1)+1}[E_k]\cup v_{4(i-1)+2}[E_k]$ or in $v_{4i}[E_k] \cup v_{4i+1}[E_k]$, say the former. By connectivity,
 $Q_1Q_2$ must contain a vertex from $v_{4(i-1)+1}[E_k]$ which has a unique color among vertices
 in $v_{4(i-1)+1}[E_k]\cup v_{4(i-1)+2}[E_k]$. This contradicts the fact that the color sequence of $Q_1Q_2$ is a repetition.

Finally, if the walk $W_1W_2$ is empty, then all vetices of the path $Q_1Q_2$ are either $X$-colored
or all their neighbors in their part of $Q_1Q_2$ are $X$-colored. By connectivity, this is only
possible if all vertices of $Q_1Q_2$ lie with $v_{4i}[E_k] \cup v_{4i+1}[E_k] \cup v_{4i+2}[E_k]$
 for some $i$. Then again by connectivity $Q_1Q_2$ must contain a vertex from $v_{4i+1}[E_k]$.
 This vertex has a unique $c$-color in $v_{4i}[E_k] \cup v_{4i+1}[E_k] \cup v_{4i+2}[E_k]$ thus
  the color sequence of $Q_1Q_2$ with respect to $c$ cannot form a repetition. This contradiction
  completes the proof of \tref{rainbow}.
\end{proof}

\section{Some remarks and open problems}
\label{sec:opprob}

K\"undgen and Pelsmajer \cite{KP} applied their method to
outerplanar graphs. Their techniques can be used to prove the
following theorem.

\begin{theorem}\label{thm:outerpl}
For every outerplanar graph $G$ and integer $k \ge 2$, $\pi(G[K_k]) \le 16 k$. Furthermore,
there exists an outerplanar graph $G_0$ such that $\pi(G_0[E_k]) > 6k$ for every positive integer $k$.
\end{theorem}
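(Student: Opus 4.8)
I would prove the two bounds by separate methods: the upper bound via the K\"undgen--Pelsmajer reduction, and the lower bound by an explicit outerplanar construction.

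\emph{Upper bound.} Fix an outerplanar graph $G$ and take a breadth-first layering $V_0,V_1,\dots,V_m$ of $G$ from an arbitrary root. Two properties of outerplanar graphs established in \cite{KP} are used: every layer $V_i$ induces a linear forest, and $G$ is shadow complete with respect to this layering, since the $i$-shadow of each connected subgraph of $G_{>i}$ has at most two vertices and therefore induces $K_1$ or $K_2$. I would then pass to the layering $V_0[K_k],\dots,V_m[K_k]$ of $G[K_k]$. As $k\ge 2$ makes $K_k$ connected, each component of $(G[K_k])_{>i}$ is $C[K_k]$ for some component $C$ of $G_{>i}$, and its $i$-shadow equals $S[K_k]$, where $S$ is the $i$-shadow of $C$ in $G$; since $S$ induces $K_1$ or $K_2$, the set $S[K_k]$ induces $K_k$ or $K_{2k}$, so $G[K_k]$ is shadow complete for this layering. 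Moreover each $V_i[K_k]$ is the lexicographic product of a linear forest with $K_k$, hence a disjoint union of graphs $P_t[K_k]$ (an isolated vertex of the forest contributing the factor $P_1[K_k]=K_k$), and by the Bar\'at--Wood bound $\pi(P_t[K_k])\le 4k$ \cite{baratwood} (the same bound that gives the upper estimate in \tref{complete}); reusing one palette of $4k$ colors on every component makes $V_i[K_k]$ nonrepetitively $4k$-colorable. Applying \tref{4times} with $b=4k$ then gives $\pi(G[K_k])\le 4\cdot 4k=16k$.

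\emph{Lower bound.} I would construct an outerplanar $G_0$ with $\pi(G_0[E_k])>6k$. The arithmetic $6k+1=4k+(3k+1)-k$ is the guiding idea: a rainbow $T_{3,6}[E_k]$ needs at least $4k$ colors by \lref{ball} and a rainbow $P_{24}[E_k]$ needs at least $3k+1$ colors by \lref{rlower}, so if one glues a copy of $T_{3,6}$ and a copy of $P_{24}$ along a single common vertex $r$, sharing only the $k$ colors on $r$, and forces the two remaining palettes to be disjoint, the total exceeds $6k$. To make the relevant layers rainbow in the first place I would attach a few pendant triangles to every vertex of the configuration so that each vertex has degree at least $6$; a pendant triangle can always be inserted along the outer face, so $G_0$ stays outerplanar, and by \lref{degree} every nonrepetitive coloring of $G_0[E_k]$ with at most $6k$ colors is rainbow on all of these vertices. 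It would then remain to build in short cycles --- again preserving outerplanarity --- that force the tree-side and path-side palettes apart, and to run the dichotomy used in \lref{ball} and \lref{rlower}: \lref{path4} for local three-in-a-row disjointness, then the palindrome analysis of Lemmas~\ref{lem:gap}--\ref{lem:find} along a sufficiently long walk, to extract either an extra color or a genuinely repetitive path.

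The main obstacle is precisely this last design step. A branching argument on a tree tops out at $4k$, and by the Bar\'at--Wood bound so does any tree even under the rainbow restriction, so the extra $2k$ colors must be produced by the cyclic part, not by deepening the tree. One has to pick the cycles (and the place where the path is attached to the tree) so that, simultaneously, $G_0$ is outerplanar, $G_0$ is finite, and along some walk of bounded length the forced pattern of pairwise almost-disjoint $k$-sets cannot be realised with only $6k$ colors. Checking that a concrete choice has all three properties calls for a careful but routine case analysis in the style of Lemmas~\ref{lem:def}--\ref{lem:defg2}, and that is where I expect most of the work to be.
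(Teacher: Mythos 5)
Your upper bound argument is essentially the same as the paper's: pass to a maximal outerplanar graph, take the K\"undgen--Pelsmajer breadth-first layering so that each layer induces a linear forest and the partition is shadow complete, blow up by $K_k$, observe that shadow completeness is preserved (since shadows of components become $K_k$ or $K_{2k}$), bound each layer of the product by $\pi(P_t[K_k])\le 4k$ via Bar\'at--Wood, and apply \tref{4times} with $b=4k$. The details you supply about shadows of components (using $k\ge 2$ to keep $K_k$ connected) are exactly the ones the paper leaves implicit. This part is fine.

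For the lower bound, however, what you have written is a plan, not a proof, and you say so yourself: ``The main obstacle is precisely this last design step.'' The gap is real. Gluing $T_{3,6}$ to $P_{24}$ at a vertex yields a tree, and you correctly note that any tree $T$ satisfies $\pi(T[E_k])\le 4k$ by Bar\'at--Wood, so no amount of degree-boosting pendant gadgets on a tree can push the Thue chromatic number past $4k$. You therefore need cycles that force the $T_{3,6}$-side palette and the $P_{24}$-side palette to be (nearly) disjoint, but you do not specify which cycles, where to place them while preserving outerplanarity, or why they would actually enforce disjointness. Nothing in the bare gluing forces the two palettes apart; in fact a nonrepetitive coloring of a tree is free to recycle colors across branches. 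Without this step the additive heuristic $4k+(3k+1)-k>6k$ proves nothing.

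The paper's construction is quite different and does not use $T_{3,6}$ at all. It starts with a path $P_{10}=p_1\cdots p_{10}$, adds a single vertex $u$ adjacent to all of $P_{10}$ (a fan, which is outerplanar and introduces the needed cycles), attaches to each $p_i$ a $24$-vertex path $Q_i$ whose vertices are all adjacent to $p_i$, and finally hangs six leaves on every core vertex to force rainbow layers via \lref{degree}. The mechanism is then a counting argument rather than a palette-disjointness argument: the $k$ colors of $u[E_k]$ cannot appear on $P_{10}[E_k]$; at most $5k$ colors on $P_{10}[E_k]$ appear only once, so by pigeonhole two consecutive layers $p_i[E_k]$, $p_{i+1}[E_k]$ each contain a ``redundant'' color; this forbids the $k$ colors of $u$ from appearing on $Q_i[E_k]$ (a short repetition through $u$) and forbids the $2k$ colors of $p_i[E_k]\cup p_{i+1}[E_k]$ from appearing on at least one of $Q_i[E_k],Q_{i+1}[E_k]$; that side is then a rainbow copy of $P_{24}[E_k]$ colored with at most $6k-k-2k=3k$ colors, contradicting \lref{rlower}. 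If you want to salvage your own plan, you should compare it against this template: the role of the universal vertex $u$ is precisely the ``force the palettes apart'' device you are missing, and the path $Q_i$ plays the role of your $P_{24}$, attached at a place chosen by pigeonhole rather than fixed in advance.
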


\begin{proof} K\"undgen and Pelsmajer \cite{KP} proved that a maximal
outerplanar graph has a shadow complete vertex-partition in which
each $G_k$ is a linear forest. Similarly, we can show that if $G$ is
a maximal outerplanar graph, then $G[K_n]$ has a shadow complete
vertex-partition in which each $G_k$ is of the form $P[K_n]$, where
$P$ is a linear forest. As $\pi(P[K_n]) \le 4k$, it follows from
\tref{4times} that $\pi(G[K_k]) \le 16 k$.

As for the lower bound, in \cite{star1,star2} an outerplanar graph is shown that has star-chromatic
number at least $6$ (a proper vertex coloring is a star-coloring if every path on four vertices uses
at least three distinct colors), thus also nonrepetitive-chromatic number at least $6$.
We can modify this example so that it gives the desired lower bound. Start with a
path $P_{10}$ on $10$ vertices. Add one vertex $u$ connected to all vertices of
$P_{10}$. Then, for each vertex $p_i$ of $P_{10}$ add a 24-vertex path $Q_i$
whose $24$ vertices are all connected to $p_i$. Let us call this the \textit{core}
of our future graph $G_0$. Finally, for every vertex $v$ in the core, let us add $6$ more
leaves $\ell_{v,1},...,\ell_{v,6}$ connected to $v$. Suppose there is a coloring of $G_0[E_k]$
with less than $6k$ colors, we shall arrive to contradiction.

If on the vertices of a layer corresponding to a vertex of the core  there
is a repeated color, then by \lref{degree} we need at least $6k+1$ colors.
Thus we can suppose that the layers corresponding to the vertices of the core
are rainbow colored. The $k$ colors $1,2,\dots k$ used for coloring $u[E_k]$
do not appear on $P_{10}[E_k]$. We call a color \textit{redundant} if it appears
at least on two vertices of $P_{10}[E_k]$. As non-redundant colors are all different,
there are at most $5k$ non-redundant colors. Thus by the pigeon-hole principle there exist
two neighboring layers $p_i[E_k]$ and $p_{i+1}[E_k]$ whose coloring contains at least
one redundant color each. Observe that on $Q_i[E_k]$ the colors $1,2,\dots k$ cannot
appear, as otherwise we would have a repetitive path of length $4$ (through $u[E_k]$
and using the vertices of the redundant color). Also, either on $Q_i[E_k]$ or on
$Q_{i+1}[E_k]$ none of the $2k$ colors of $p_i[E_k]$ and $p_{i+1}[E_k]$ appear,
as otherwise there would be a repetitive path of length $4$ with its endpoints in
$Q_i[E_k]$ and $Q_{i+1}[E_k]$. Suppose that they do not appear on $Q_i[E_k]$.
Thus we can use at most $6k-k-2k=3k$ colors to color $Q_i[E_k]$, but Theorem
\ref{thm:rainbow} implies that we would need at least $3k+1$ colors for this, a contradiction.
\end{proof}

 Tightening the gap between lower and upper bounds in \tref{rainbow},
\tref{complete} and \tref{outerpl}
 are natural open problems related to results in this paper.

Fractional versions of graph parameters have attracted the attention
of researchers. We now introduce a fractional version of
nonrepetitive coloring. For a pair of positive integers $p<q$, a {\em $p$-tuple nonrepetitive $q$-coloring} of $G$
is a mapping $c:V(G) \rightarrow \binom{[q]}{p}$ such that
 for any path $v_1\ldots v_{2l}$ in $G$ the sequence $c_1\ldots c_{2l}$ of colors is not a repetition for any
 choice of $c_i \in c(v_i)$. The {\em fractional Thue chromatic number } $\pi_f(G)$ of a graph $G$ is defined as
\[
\pi_f(G)=\inf\left\{\frac{q}{p}: \exists   \text{a $p$-tuple nonrepetitive $q$-coloring}\ c ~\text{of}\ G\right\}.
\]
By definition, for any graph $G$, $\pi_f(G) \le \pi(G)$. It is easy to see that $\pi_f(P_n)=\pi(P_n)$ for all $n$.
On the other hand, already for the cycle of length $7$, the ordinary Thue chromatic number and the fractional Thue
chromatic number do not coincide as $\pi(C_7)=4$ and $\pi_f(C_7)=3.5$. For the upper bound take the following
$(7,2)$-nonrepetitive coloring of $C_7$:  $v_1\rarrow \{1,2\}; v_2\rarrow \{3,4\}; v_3\rarrow \{1,7\};
v_4\rarrow \{5,6\};v_5\rarrow \{3,4\};v_6\rarrow \{2,6\};v_7\rarrow \{5,7\}$. The lower bound is an elementary case analysis.

\begin{problem}
How big can the   be the difference $\pi(G)-\pi_f(G)$? Is $\pi(G)$
bounded from above by a function of $\pi_f(G)$?
\end{problem}

 For arbitrary graphs, it was proved
\cite{AGHR,dujmovic} that if the maximum degree of $G$ is $\Delta$
then $\pi(G)\le c\Delta^2$ ($c$ is a constant independent of
 $G$ and $\Delta$). This immediately gives that
 $\pi(G[K_k])\le c k^2\Delta^2$, as the maximum degree of $G[K_k]$ is $k(\Delta+1)-1$.
As the graphs $G[K_k]$ have special structure, one may expect that
the upper bound to be improved.   Bar\'at and Wood investigated
nonrepetitive colorings of walks \cite{baratwood}. Following their
definitions, a walk $\{v_1 , v_2 , \ldots , v_{2t}\}$ is \textit{boring} if
$v_i = v_{t+i}$ for all $1\le i\le t$. Clearly, a boring walk is
repetitively colored by every coloring. A coloring $f$ is
\textit{walk-nonrepetitive}
 if only boring walks are repetitively colored by $f$. Let $\pi^W(G)$ denote the least integer such that $G$
 has a walk-nonrepetitive coloring with $\pi^W(G)$ colors. Bar\'at and Wood pose the following problem:
 is there a function $f$ such that $\pi^W(G) \le f(\Delta)$? If this is true, then a rainbow blow-up of
 such a coloring would immediately imply that $\pi_R(G[E_k])\le k\pi^W(G)\le kf(\Delta)$. Indeed a
  repetitive path in $G[E_k]$ would be a 'lift' of a repetitive walk in the original coloring,
  thus boring, which is a contradiction (as the path in $G[E_k]$ cannot be repetitive). It is also
  easy to see that the same coloring would actually show that $\pi(G[K_k])\le k\pi^W(G)\le kf(\Delta)$.

\begin{problem}
Is there a function $f$ such that for every graph $G$ of maximum degree $\Delta$,   $\pi(G[K_k])\le kf(\Delta)$?
Perhaps $\pi(G[K_k])\le c k\Delta^2$ for some constant $c$?
\end{problem}

A natural marriage of the above two notions is the {\em fractional
walk-nonrepetitive chromatic number}, where in the definition of
$p$-tuple nonrepetitive $q$-coloring of $G$, the path $v_1v_2\ldots v_{2l}$
in $G$ is replaced by a walk. We denote by $\pi_f^W(G)$ the
fractional walk-nonrepetitive chromatic number of $G$. It is obvious
that for   path $P$ of length at least $4$,
 $\pi_f^W(P_n) \ge \pi_f(P_n)=\pi(P_n)=3$   and  $\pi_f^W(P_n)\le \pi^W(P_n)\le 4$.
It is also easy to see that $\inf(\pi_R(P_n[E_k])/k)\le
\pi_f^W(P_n)$. A natural question is to determine $\pi_f^W(P_n)$ and also to see
whether equality holds in the previous inequality.

\vskip 0.3truecm

Given a list assignment $L$ with $L(v) \subset \mathbb{N}$ for all
vertices $v$ of a graph $G$, we say that $G$ is $L$-nonrepetitively
colorable if there exists a nonrepetitive coloring $C$ of $G$ with
$c(v) \in L(v)$ for all $v \in V(G)$. The {\em Thue choice number}
$\pi_L(G)$ of a
 graph $G$ is the minimum integer $m$ such that $G$ is $L$-nonrepetitive colorable for every
 list assignment $L$ provided $|L(v)| = m$   for all $v \in V(G)$.
It is known \cite{GPZ2011} that the Thue choice number of a path is
at most $4$. However, the Thue choice number of trees is unbounded
\cite{thuechoose}.

\begin{problem}
Is there a  constant $c$ such that $\pi_L(P_{\infty}[K_k]) \le ck$?
\end{problem}

In the first draft of this paper, we posed the following conjecture,
which has recently been confirmed by Kozik \cite{K}.

\begin{conjecture}
There exists an infinite sequence on four letters, $A,B,C$ and $D$ such that the sequence
is nonrepetitive, palindrome-free and avoids the subsequences $CD$ and $DC$.
\end{conjecture}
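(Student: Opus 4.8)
The plan is to first show that the three requirements force the word into a rigid shape, and then to reduce the whole problem to a single three‑letter avoidance statement. Suppose $W$ is an infinite word over $\{A,B,C,D\}$ that is nonrepetitive, palindrome‑free and avoids $CD$ and $DC$. Then $W$ has no two consecutive letters both in $\{C,D\}$, since each of $CC,DD,CD,DC$ is forbidden, so $W$ is a concatenation of maximal runs over $\{A,B\}$ separated by single occurrences of $C$ or $D$. Each run is a square‑free binary word, hence of length at most $3$, and a run of length $3$ is $ABA$ or $BAB$, a palindrome; so every run has length at most $2$. Conversely, to build such a $W$ it therefore suffices to build one square‑free word $U$ over $\{A,B,\star\}$ that avoids the four factors $ABA$, $BAB$, $A\star A$, $B\star B$, and then to replace each occurrence of $\star$ by $C$ or $D$ so as to restore all three properties (note that $\star A\star$ and $\star B\star$ are \emph{allowed} in $U$).

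The replacement step is easy. Since $U$ is square‑free it has no factor $\star\star$, so in $W$ the letters $C$ and $D$ are never adjacent; and since the letter‑to‑letter map sending $C,D\mapsto\star$ carries any square of $W$ to a square of $U$, the word $W$ is automatically square‑free. The only palindromic factors the replacement can create are of the form $CyC$ or $DyD$ arising from a factor $\star y\star$ of $U$ — the factors $ABA,BAB,A\star A,B\star B$ were banned outright and $\star\star\star$ cannot occur. Two occurrences of $\star$ give such a pattern only if they are exactly two positions apart, and since $\star\star$ is forbidden any such pair consists of \emph{consecutive} occurrences of $\star$; hence the ``conflict graph'' on the occurrences of $\star$ is a subgraph of the path listing them in order, a disjoint union of paths, hence properly $2$‑colorable. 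Replacing the occurrences of $\star$ by $C$/$D$ according to such a $2$‑coloring yields a valid $W$.

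It remains to construct $U$; relabeling $\star$ as $C$, the task is to build an infinite square‑free word over $\{A,B,C\}$ avoiding the four length‑$3$ factors $ABA,BAB,ACA,BCB$ (while keeping $CAC,CBC$ available). I would attempt this in one of two standard ways: exhibit an explicit morphism $\mu$, possibly on a larger alphabet followed by a coding onto $\{A,B,C\}$, and verify by a finite computation that $\mu$ is square‑free‑preserving and that neither $\mu$ nor its internal junctions ever create one of the four forbidden words, so that the fixed point works; or run the entropy‑compression / algorithmic local‑lemma scheme, building $U$ from left to right and backtracking whenever a square or a forbidden factor appears, with termination bounded via an injective encoding of the computation. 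A useful simplification: all powers of period at most $2$ are already excluded statically, since each of $ABAB,ACAC,BCBC,\dots$ contains one of the four forbidden factors, so only squares of period $\ge 3$ remain to be controlled.

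This last step is the crux. We sit essentially at the avoidability threshold — three letters and four extra forbidden length‑$3$ words — so most positions admit a unique legal continuation (precisely those whose last two letters are $CA$ or $CB$ branch), and the ``free entropy'' available to a compression argument is correspondingly thin, which makes that analysis delicate; and while a suitable square‑free morphism almost certainly exists, finding it and certifying square‑freeness is in practice a computer search. A related trap worth flagging: in the block picture a square of $U$ need not be aligned with the blocks — it can straddle a length‑$2$ block — so making the sequence of blocks itself repetition‑free is \emph{not} enough. These misaligned squares are exactly what defeats naive attempts (for instance, driving the block sequence by the Thue–Morse word), and ruling them out is the genuine content of the construction.
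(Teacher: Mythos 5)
Your reduction is correct and genuinely clarifying, but as it stands the proposal does not prove the statement: the pivotal existence claim is left open, and you say so yourself. Concretely, Steps~1--5 are sound. From square-freeness alone, avoiding odd-length palindromes of length $\ge 3$ is equivalent to avoiding the length-$3$ palindromes $xyx$; the blocks-over-$\{A,B\}$ decomposition and the observation that they have length $\le 2$ are right; the projection $C,D\mapsto\star$ carries squares to squares, so square-freeness of $U$ propagates to $W$; the only length-$3$ palindromes not already killed by the forbidden factors of $U$ are $CyC$/$DyD$ arising from $\star y\star$; and the conflict graph on $\star$-occurrences is indeed a subgraph of a path (any two $\star$'s two positions apart are consecutive, since $\star\star$ is a square), hence $2$-colorable. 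So the statement is correctly reduced to: \emph{there is an infinite square-free word $U$ over $\{A,B,\star\}$ avoiding the factors $ABA$, $BAB$, $A\star A$, $B\star B$}.

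The gap is that you never establish that $U$ exists. You list two standard strategies (a square-free-preserving morphism plus finite verification, or an entropy-compression/backtracking argument) and then explicitly concede that both amount to a computer search you have not carried out, and you even flag why the obvious shortcuts (e.g.\ driving the block sequence by Thue--Morse) fail because of misaligned squares. Acknowledging the crux does not close it: the forbidden factors $ABA,BAB,A\star A,B\star B$ strip away most of the freedom of the ternary alphabet, and the existence of such a $U$ is exactly as nontrivial as the original conjecture. Until that word is exhibited and certified (or its existence is proved by some argument), this is a reformulation, not a proof. For context, the paper itself does not prove the conjecture either --- it records that Kozik later confirmed it by personal communication --- so there is no reference argument to compare your route against; the assessment is purely of your proposal, which has a genuine unfilled hole at its final step.
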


\subsubsection*{Acknowledgement} This research was done while the first two authors enjoyed
the hospitality of Zhejiang Normal University.

\end{document}